\newcommand{\Om}{\Omega}
\newcommand{\Ga}{\Gamma}
\renewcommand{\vec}[1]{\mathbf{#1}}
\newtheorem{theorem}{Theorem}[section]
\newtheorem{lemma}[theorem]{Lemma}
\newtheorem{proposition}[theorem]{Proposition}
\DeclareMathOperator*{\esssup}{ess\,sup}
\DeclareMathOperator*{\essinf}{ess\,inf}
\DeclareMathOperator*{\aufspann}{span}
\begin{document}
\begin{frontmatter}



\title{Mathematical analysis of a marine ecosystem model with nonlinear coupling terms and non-local boundary conditions}


\author[cau]
{Christina~Roschat\corref{cor1}}
\ead{cro@informatik.uni-kiel.de}
\author[cau]
{Thomas~Slawig}
\address[cau]
{Department of Computer Science, Kiel Marine Science - Centre for Interdisciplinary Marine Science, Christian-Albrechts-Universit\"at zu Kiel,
24098 Kiel, Germany
}
\cortext[cor1]{Corresponding author.
}

\begin{abstract}
We investigate the weak solvability of initial boundary value problems associated with an ecosystem model of the marine phosphorus cycle. The analysis covers the model equations themselves as well as their linearization which is important in the model calibration via parameter identification. We treat both cases simultaneously by investigating a system of advection-diffusion-reaction equations coupled by general reaction terms and boundary conditions.
We derive a weak formulation of the generalized equations and prove two theorems about its unique solvability provided that the reaction terms consist of Lipschitz continuous and monotone operators. In the proofs, we adapt different techniques (Galerkin approximation, Banach's Fixed Point Theorem) to the multi-dimensional model equation. By applying the general theorems to the problems associated with the phosphorus model we obtain results about existence and uniqueness of their solutions. Actually, by assuming a generalized setting the theorems establish the basis for the mathematical analysis of the whole model class to which the investigated phosphorus model belongs.
\end{abstract}

%


\end{frontmatter}
\thispagestyle{empty}



\section{Introduction}
The understanding of biogeochemical cycles in marine ecosystems is an important aspect in many scientific areas. 
In climate research, the oceans are investigated because of the prominent role they play in the global cycle of carbon dioxide ($CO_2$).
The greenhouse gas is taken up through the sea surface into the water and passes through a cycle which is basically determined by the transformation of $CO_2$ into organic material during the photosynthesis of marine plants and its remineralization after their dying. To some extend, the dead organic material remains on the sea bottom effecting a long-term storage of $CO_2$. This effect is supposed to help understand and control climate change (see e.g. \cite{dil03}).

Mathematical ecosystem models give a means to describe biogeochemical cycles in marine ecosystems. They provide information about the concentrations of the involved tracers (molecules or life forms) and thereby contribute to the understanding of the underlying biogeochemical processes. 

An ecosystem model consists of a system of advection-diffusion-reaction equations whose dimension corresponds to the number of tracers. These partial differential equations, also called transport equations, describe, on the one hand, the influence of the ocean circulation on the tracer concentration (see e.g. Stocker \cite{sto11}). In applications, the corresponding values for the current velocity (advection) and the diffusion coefficient are computed by ocean circulation models. Therefore, in the mathematical investigations, they are assumed to be known. 

On the other hand, the model equations are coupled by reaction terms reflecting the biogeochemical processes of the ecosystem.
For example, reaction terms can express predator-prey relationships between two tracers or the growth of phytoplankton depending on insolation and photosynthesis. 

During the investigation of marine ecosystem models the reaction terms are of particular interest. While the terms concerning the ocean circulation are certain reaction terms provide a means to adapt the model to the relevant biogeochemical processes. According to the great variety of possible ecosystems and tracer cycles, many kinds of reaction terms can appear.
They can depend on place and time (e.g. insolation varies over the day) as well as on all tracers to describe their mutual influence. Reaction terms in marine ecosystem models are mostly nonlinear and usually contain a non-local part, i.e. a part that depends on more than one spatial coordinate. Typically, sinking processes over water columns require this kind of reaction term. 
Coupling and nonlinearity pose a challenge to the numerical as well as the theoretical treatment of the model equations.

One of the most important tasks in modeling is to ensure that the model output reflects really observed data as exactly as possible. As soon as the basic structure of the reaction terms is determined this task mainly involves identifying adequate parameter values. Parameters like e.g. growth rates, half saturation constants or remineralization rates are essential for the description of the respective biogeochemical process. Parameter identification is often carried out using optimal control theory, e.g. by R\"uckelt et al. \cite{rue10} who investigate an ecosystem model with four equations. Thereby, the distance between observational data and the model output, regarded as a function of the parameters, is minimized. To characterize the optimal parameters it is useful to formulate an optimality system which contains, beside the original model equation, the so-called adjoint equation coupled by the adjoint operators of the reaction terms' Fr\'echet-derivatives (cf. Tr\"oltzsch~\cite{tr02}). The formulation of the adjoint equation requires the solution of a system coupled by the derivatives of the original reaction terms. Solving this derivative (or linearization) of the original equations is therefore an important step towards adequate parameters.

In applications, biogeochemical models and their derivatives are solved numerically whereas, mostly, the underlying continuous models undergo no further investigation. However, continuous and discretized equations depend on each other. If the continuous equations turned out to be insolvable it would be difficult to interpret the numerically obtained solution. If otherwise the equations were not uniquely solvable different numerical algorithms might yield different, possibly inadequate, solutions. In both cases, the quality of the numerical results would be called into question. 
Thus, the theoretical analysis provides an instrument to validate and improve biogeochemical models.

The main aspects of marine ecosystem models are illustrated by the $PO_4$-$DOP$-model by Parekh at al.~\cite{pa05} describing the marine phosphorus cycle. It is the basis for more complicated models (in the cited paper the authors add an equation to investigate the marine iron cycle) and serves for scientific purposes (testing numerical methods and algorithms). Additionally, the occurring reaction terms are typical for marine ecosystem models.

In this paper, we analyze the existence and uniqueness of weak solutions of both the $PO_4$-$DOP$-model equations and their derivative. 
Since both equations are structured equally we consider a more general setting including generalized reaction terms and boundary conditions. We additionally allow an arbitrary number of model equations such that the results will be applicable also to other, more complicated models.

The paper is structured as follows: In the following section, we introduce the mathematical formulation of the $PO_4$-$DOP$-model and specify the initial boundary value problems investigated in this paper. In Sec.~\ref{sec:weak}, we state some mathematical preliminaries and develop a weak formulation of the original problem. The next two sections each contain the formulation and proof of one existence and uniqueness theorem concerning the weak formulation. In Sec.~\ref{sec:examples}, we apply the general results to the problems associated with the $PO_{4}$-$DOP$-model. In the last section, we draw some conclusions from the previous results.

\section{The $PO_4$-$DOP$-model}\label{sec:modellgleichungenndop}
Parekh at al. \cite{pa05} present a model of the iron concentration in relation to the marine phosphorus cycle. Leaving out the iron component leads to a model of the global phosphorus cycle, called $PO_4$-$DOP$-model after the two relevant tracers. The authors, focusing on biogeochemical aspects, confine themselves to briefly outlining the mathematical assumptions about their ecosystem model. Therefore, in the following introduction, we add and precise some information in order to obtain a complete mathematical formulation. 
\subsection{The domain}
The modeled ecosystem is located in a three-dimensional bounded domain $\Om\subseteq\mathbb{R}^3$. $\Om$ is determined by the open, bounded water surface $\Om^{\prime}\subseteq\mathbb{R}^2$ and a well-defined depth $h(x^{\prime})>0$ at every surface point $x^{\prime}\in \Om^{\prime}$. The function $h$ is supposed to be smooth and bounded by the total depth of the ocean $h_{max}$. Thus, we have 
\begin{itemize}
  \item the domain $\Om:=\{(x^{\prime\!\!\!},x_3);x^{\prime}\in\Om^{\prime},x_3\in (0,h(x^{\prime}))\}$ and
  \item the boundary $\Ga:=\{(x^{\prime\!\!\!},h(x^{\prime}));x^{\prime}\in\Om^{\prime}\}\cup(\overline{\Om^{\prime}}\times\{0\})$ consisting of the boundary inside the water and the surface.  
\end{itemize}
The domain is separated into two layers, the euphotic, light-flooded zone $\Om_1$ below the surface and the dark, aphotic zone $\Om_2$ beneath. The maximal depth of the euphotic zone is denoted by $\bar{h}_e\leq h_{max}$. However, it is possible for the domain to end within the euphotic zone.
To cover this case the actual depth of the euphotic zone is defined by $h_e(x^{\prime}):=\min\{\bar{h}_e,h(x^{\prime})\}$, a function of the coordinate $x^{\prime}$. We accordingly split the surface into the part $\Om_2^{\prime}:=\{x^{\prime}\in \Om^{\prime}; h(x^{\prime})>\bar{h}_e\}$ above the aphotic zone and the rest $\Om^{\prime}_1:=\Om^{\prime}\setminus \Om_2^{\prime}$. Analogously dividing the boundary we arrive at
\begin{itemize} 
\item the euphotic zone $\Om_1:=\{(x^{\prime\!\!\!},x_3);x^{\prime}\in\Om^{\prime},x_3\in (0,h_e(x^{\prime}))\}$,
  \item the aphotic zone $\Om_2:=\{(x^{\prime\!\!\!},x_3);x^{\prime}\in\Om_2^{\prime},x_3\in (\bar{h}_e,h(x^{\prime}))\}$,
  \item  the euphotic boundary $\Ga_1:=\{(x^{\prime\!\!\!},h(x^{\prime}));x^{\prime}\in\overline{\Om^{\prime}_1}\}$,
  \item the aphotic boundary $\Ga_2:=\{(x^{\prime\!\!\!},h(x^{\prime}));x^{\prime}\in\Om_2^{\prime}\}$.
\end{itemize}
%
%
\subsection{The model equations}
We consider the two tracers phosphate, $PO_4$, and dissolved organic phosphorus, $DOP$, as components of the vector $y:=(y_1,y_2):=(PO_4,DOP)$. Each of the tracers is regarded as a function of space and time solving the non-autonomous advection-diffusion-reaction equation
 \begin{align*}
\displaystyle \partial_ty_j(x,t)+\vec v(x,t)\cdot\nabla y_j(x,t)-{\rm div}(\kappa_j(x,t)\nabla  y_j(x,t))+d_j(y,x,t)=0
\end{align*}
$\text{for all }(x,t)\in\Om\times[0,T]$ and $j=1,2$. 

The velocity $\vec v$ represents advection while $\kappa_j$ is a coefficient for both turbulent and molecular diffusion. 
Since turbulent dominates molecular diffusion the latter is often neglected, i.e. it is assumed $\kappa=\kappa_1=\kappa_2$. However, from a mathematical point of view this simplification is not necessary. 

The biogeochemical processes, represented by the reaction terms $d_j$, differ according to the layers. In the light-flooded zone, phosphate is taken up via photosynthesis limited by insolation and the present concentration of phosphate. This dependence is described by means of saturation functions (see Section~\ref{sub:saturation}). A fraction $\nu$ of the uptake is transformed into $DOP$, the remnants are exported into the deeper layer. Furthermore, $DOP$ is remineralized into $PO_4$ with a remineralization rate $\lambda$. Being independent of light this transformation takes place in both of the layers.
Altogether, these processes are represented by the nonlinear coupling term 
\begin{displaymath}
d:L^2(\Om\times[0,T])^2\to L^2(\Om\times[0,T])^2 \mbox{ with }d:=(d_1,d_2) \mbox{ defined by }
\end{displaymath}
\begin{displaymath}
d_1(y,x,t):=
\begin{cases}
 -\lambda y_2(x,t)+G(y_1,x,t)&\mbox{in $\Om_1\times[0,T]$,}\\
 -\lambda y_2(x,t)+\bar{F}(y_1,x,t)&\mbox{in $\Om_2\times[0,T]$}
\end{cases}
\end{displaymath}
and
\begin{displaymath}
d_2(y,x,t):=
\begin{cases}
 \lambda y_2(x,t)-\nu G(y_1,x,t)&\mbox{in $\Om_1\times[0,T]$,}\\
 \lambda y_2(x,t)&\mbox{in $\Om_2\times[0,T].$}
\end{cases}
\end{displaymath}
In detail, we write
\begin{align*}
G(y_1,x,t)&:=\alpha\frac{y_1(x,t)}{|y_1(x,t)|+K_P}\frac{I(x^{\prime\!\!\!},t)\text{e}^{-x_3K_W}}{|I(x^{\prime\!\!\!},t)\text{e}^{-x_3K_W}|+K_I} &&\mbox{ on } \Om_1\times[0,T]
\end{align*}
for the biological uptake of phosphate. The maximum rate $\alpha$ is limited by  the present concentration of phosphate and insolation according to Michaelis-Menten kinetics. Insolation is represented by the bounded function $I\in L^{\infty}(\Om^{\prime}\times[0,T])$ depending on time and the water surface. The export function
\begin{align*}
E(y_1,x^{\prime},t)&:=(1-\nu)\int_0^{h_e(x^{\prime})}G(y,(x^{\prime\!\!\!},x_3),t)dx_3 &&\mbox{ on } \Om^{\prime}\times[0,T]
\end{align*}
stands for the fraction of consumed phosphorus exported into the deeper layer. The integral over the depth of the euphotic zone ensures that the material in the whole water column is exported at the same time. Finally,
\begin{align*}
\bar{F}(y_1,x,t)&:=-E(y_1,x^{\prime},t)\frac{\beta}{\bar{h}_e}\left(\frac{x_3}{\bar{h}_e}\right)^{-\beta-1} &&\mbox{ on } \Om_2\times[0,T]
\end{align*}
represents the export being reduced while sinking through the second layer. The reduction is achieved by multiplication of a factor smaller than one.

The model parameters are assembled in the vector $(\lambda,\alpha,K_{P},K_{I},K_{W},\beta,\nu)\in\mathbb{R}^7$. In the cited paper, their values are determined via laboratory experiments or observations although the authors point out that some of them are not well known and maybe not even constant.
\subsection{Boundary conditions and initial value}
To obtain a mathematically well-posed problem, we will impose restrictions about the tracers' behavior on the boundary and at the initial time $t=0$.

Usually, the original formulation of an ecosystem model does not provide explicit statements about the behavior on the boundary. This is also true for the $PO_4$-$DOP$-model. However, since there are no sources or sinks it is appropriate to demand that the total amount of tracer concentrations in $\Om$ is constant. Neglecting molecular diffusion, i.e. $\kappa_1=\kappa_2$, the corresponding condition $\frac{d}{dt}\int_{\Om} (y_1+y_2)(x,t)dx=0$ is equivalent to the Neumann boundary condition
\begin{displaymath}
 \nabla y_j(x,t)\cdot(\kappa(x,t)\eta(x))+ b_j(y,x,t)=0
\end{displaymath}
for all $(x,t)\in\Ga\times[0,T]$ and $j\in\{1,2\}$ with the non-local coupling term
\begin{displaymath}
b:L^2(\Om\times[0,T])^2\to L^2(\Ga\times[0,T])^2 \mbox{ with }
b:=(b_1,b_2) \mbox{ defined by }
\end{displaymath}
\begin{displaymath}
b_1(y,x,t):=
\begin{cases}
 -E(y_1,x^{\prime},t)&\mbox{for $x=(x^{\prime},x_3)\in\Ga_1$, $t\in[0,T]$,}\\
 -E(y_1,x^{\prime},t)\left(\frac{x_3}{\bar{h}_e}\right)^{-\beta}&\mbox{for $x=(x^{\prime},x_3)\in\Gamma_2$, $t\in[0,T]$,}\\
0&\mbox{for $x=(x^{\prime},0)\in\Gamma^{\prime}$, $t\in[0,T]$}
\end{cases}
\end{displaymath}
and $b_2(y,x,t)=0$. Neumann boundary conditions are natural for problems given by transport equations. They specify the derivative alongside the vector $\kappa(s,t)\eta(s)$, where $\eta(s)$ is the outward pointing unit normal vector, and thereby reflect the change of tracer concentration at the boundary. The fact $b_1\neq0$ signifies that phosphate escapes through the boundary. Since there is no aphotic zone beneath $\Ga_1$, here, the total export $E$ leaves $\Om$. The export escaping through $\Ga_2$ is reduced according to the depth of the superjacent aphotic zone. With respect to $DOP$, the model is designed as a closed system, i.e. the total $DOP$ concentration is remineralized into phosphate. This corresponds to $b_2=0$.

We additionally fix an initial value $y_{0}^j$ which is a function of $\Om$ for $j=1,2$ representing the tracers' concentration at $t=0$:
\begin{displaymath}
y_j(x,0)=y_{0}^j(x).
\end{displaymath}
The vector of all initial values will be denoted by $y_0:=(y_{0}^1,y_0^2)$.

In total, we arrive at the initial boundary value problem
 \begin{align}
\label{eq:zustand}
\begin{array}{rcll}
\displaystyle \partial_ty_j(x,t)+\vec v(x,t)\cdot\nabla y_j(x,t)-{\rm div}(\kappa_j(x,t)\nabla  y_j(x,t))+d_j(y,x,t)&=&0
\\
\displaystyle \nabla y_j(x,t)\cdot(\kappa(x,t)\eta(x))+ b_j(y,x,t)&=&0
\\
y_j(x,0)&=&y_{0}^j(x)
\end{array}
\end{align}
$\text{for all }
j=1,2.$ To compute the derivative of $y$ solving \eqref{eq:zustand} with respect to the parameters we need the solution $h$ of the linearized equation  
 \begin{align}\label{eq:zustand_derivative}
\begin{array}{rcll}
\displaystyle \partial_th_j(x,t)+\vec v(x,t)\cdot\nabla h_j(x,t)-{\rm div}(\kappa_j(x,t)\nabla  h_j(x,t))+\partial_yd_j(y)h(x,t)&=&f(x,t)
\\
\displaystyle \nabla h_j(x,t)\cdot(\kappa(x,t)\eta(x))+ \partial_yb_j(y)h(x,t)&=&g(x,t)
\\
h_j(x,0)&=&0
\end{array}
\end{align}
$\text{for all }
j=1,2.$ Here, $f$ and $g$ denote the derivatives of $d$ and $b$ with respect to the parameters. 

\section{Mathematical formulation}\label{sec:weak}
The analogous structure of the systems \eqref{eq:zustand} and \eqref{eq:zustand_derivative} suggests to carry out the mathematical analysis for a generalized initial boundary value problem. To cover also a variety of other models we will consider an arbitrary number of equations with unspecified reaction terms and the dimension $n\leq3$ for $\Om$. The $NPZD$-model, presented by R\"uckelt et al. \cite{rue10}, for instance, is defined on a one-dimensional water column. Having analyzed the generalized problem we will specialize the results with respect to the $PO_4$-$DOP$-model.
\subsection{General assumptions}\label{sec:generalassumptions}
Throughout this paper, let $s\in\mathbb{N}$, $n\leq3$, $T>0$ and $\Omega\subset\mathbb{R}^n$ be an open, bounded set with a Lipschitz boundary\footnote{For a definition see e.g. Tr\"oltzsch \cite[Section~2.2]{tr02}.} $\Gamma:=\partial\Omega$. $\eta(s)$ denotes the outward-pointing unit normal vector in $s\in\Ga$. We abbreviate $Q_T:=\Omega\times(0,T)$ and $\Sigma:=\Gamma\times(0,T)$.

Consider further $\vec v\in L^\infty(0,T; H^1(\Om)^n)$ with the properties ${\rm div}(\vec v(t))=0$ in $L^2(\Om)$ and $\vec v(t)\cdot\eta=0$ in $L^2(\Ga)$, each for almost every $t\in [0,T]$. Let $\kappa\in L^\infty(Q_T)^s$ with
$\kappa_{\min}:=\essinf\{\kappa_j(x,t);(x,t)\in Q_T,j=1,\dotsc,s\}>0$ and $y_0\in L^2(\Om)^s$. We denote
 $\vec v_{\max}:=\|\vec v\|_{L^\infty(0,T;H^1(\Om)^n)}$ and $\kappa_{\max}:=\esssup\{\kappa_j(x,t);(x,t)\in Q_T,j=1,\dotsc,s\}$.
 
Finally, we consider the continuous reaction terms 
\begin{displaymath}
d:L^2(Q_T)^s\to L^2(Q_T)^s\text{ and } b:L^2(Q_T)^s\to L^2(\Sigma)^s
\end{displaymath}
defined by the indexed families $(d(t))_t$ and $(b(t))_t$ of operators
\begin{displaymath}
d(t):L^2(\Om)^s\to L^2(\Om)^s\text{ and } b(t):L^2(\Om)^s\to L^2(\Ga)^s
\end{displaymath}
via $d(y,x,t):=d(y)(x,t):=d(t)(y(t))(x)$ and $b(y,x,t):=b(y)(x,t):=b(t)(y(t))(x)$.

\subsection{Notation and preliminaries}
The mathematical investigations in this paper are based on the theories of normed linear spaces (especially of $L^p$-functions) and Hilbert spaces \cite{con97,rud87}. 

Throughout the paper, norms will usually be distinguished by an index indicating the corresponding space. An exception is made for the Hilbert space $L^2(E)^s$ of $s$-dimensional vectors of quadratically integrable functions on a set $E$. Here, we write $\|.\|_{E^s}$ instead of $\|.\|_{L^2(E)^s}$. If $s=1$ the index $s$ is omitted. The same rule applies for inner products in Hilbert spaces being generally defined by round brackets $(.\,,.)$ with the corresponding index. In contrast, the scalar product in $\mathbb{R}^n$ is denoted by a dot.

The applications of linear functionals (dual pairings) are denoted by angle brackets $\langle.\,,.\rangle$ subscripted by the corresponding dual space. Dual pairings without any index belong to the space $(H^1(\Om)^*)^s$ and are defined by
\begin{displaymath}
\langle f,v\rangle:=\langle f_1,v_1\rangle_{H^1(\Om)^*}+\dots+\langle f_s,v_s\rangle_{H^1(\Om)^*}\quad\text{for all $f\in (H^1(\Om)^*)^s$ and }v\in H^1(\Om)^s.
\end{displaymath}

Similarly, given a Hilbert space $H$, the inner product on the Cartesian product $H^s$ is defined by
\begin{displaymath}
(x,y)_{H^s}:=(x_1,y_1)_{H}+\dots+(x_s,y_s)_{H}\quad\text{for all $x,y\in H$}.
\end{displaymath}
The product Hilbert space is always endowed with the norm induced by this inner product.

Functions in two variables (on $Q_T$ or $\Sigma$) are usually regarded as abstract functions defined on $[0,T]$ with values in a function space on $\Om$ or $\Ga$, respectively. An introduction to these functions is given e.g. by Gajewski et al. \cite{gaj67}. In the context of time-dependent partial differential equations, the abstract function space
\begin{displaymath}
W(0,T):=\{y\in L^2(0,T; H^1(\Om));\,y^{\prime}\in L^2(0,T; H^1(\Om)^*)\}
\end{displaymath}
is of great significance.
The weak derivative $y^{\prime}$ is called \emph{distributional} since it is no function. The formal definition is given e.g. by R\r{u}\v{z}i\v{c}ka \cite{ru10}. The space $W(0,T)$ is well investigated. Some important properties are summarized in the theorem beneath. The proofs of the first two statements are extensions of the results in Sec.~9.3. of Evans \cite{evans}. The third statement is a special case of Theorem IV.1.17 by Gajewski et al. \cite{gaj67}.
\begin{theorem}\label{satz:hauptsatz} The following properties are valid:
\begin{enumerate}
  \item The space $W(0,T)$ is continuously embedded in $C([0,T];L^2(\Om))$, i.e. there is a constant $C_E>0$ with
\begin{equation*}
 \|y\|_{C([0,T];L^2(\Om))}\leq C_E\|y\|_{W(0,T)}\quad\text{for all $y\in W(0,T)$}.
\end{equation*}
  \item For each $y\in W(0,T)$ the map $t\mapsto\|y(t)\|_{L^2(\Om)}^2$ is weakly differentiable with the almost everywhere defined weak derivative $\frac{d}{dt} \|y(t)\|_{L^2(\Om)}^2=2\langle y^{\prime}(t),y(t)\rangle_{H^1(\Omega)^*}$.
  \item For all 
  $y\in W(0,T)$ the following ``fundamental theorem'' holds:
\begin{displaymath}
\int_0^T\langle y^{\prime}(t),y(t)\rangle_{H^1(\Omega)^*}dt=\frac{1}{2}(\|y(T)\|_{L^2(\Om)}^2-\|y(0)\|_{L^2(\Om)}^2).
\end{displaymath}
\end{enumerate}
\end{theorem}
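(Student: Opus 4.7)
The strategy is the classical one for Bochner-type spaces with a Gelfand triple $H^1(\Omega)\hookrightarrow L^2(\Omega)\hookrightarrow H^1(\Omega)^*$: prove an integration-by-parts identity for smooth functions, then extend by density. Concretely, I would first establish that $C^1([0,T];H^1(\Omega))$ is dense in $W(0,T)$. The standard argument extends $y$ to a slightly larger interval (by reflection, or by zero after multiplication with a cut-off and reflection near the endpoints) and mollifies in time with a smooth kernel. The mollified functions lie in $C^\infty([0,T];H^1(\Omega))$ and converge to $y$ in $L^2(0,T;H^1(\Omega))$ while their time derivatives converge in $L^2(0,T;H^1(\Omega)^*)$.

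For $y\in C^1([0,T];H^1(\Omega))$ the classical product rule gives
\begin{equation*}
 \tfrac{d}{dt}\|y(t)\|_{L^2(\Om)}^2 = 2\,(y'(t),y(t))_{L^2(\Om)} = 2\,\langle y'(t),y(t)\rangle_{H^1(\Om)^*},
\end{equation*}
so that, for $0\le s\le t\le T$,
\begin{equation*}
 \|y(t)\|_{L^2(\Om)}^2 - \|y(s)\|_{L^2(\Om)}^2 = 2\int_s^t\langle y'(\tau),y(\tau)\rangle_{H^1(\Om)^*}\,d\tau.
\end{equation*}
Applying Cauchy--Schwarz and Young's inequality to the right-hand side with $s=t$ and $t$ replaced by arbitrary times, and then averaging over $s\in[0,T]$, yields a bound of the form $\|y\|_{C([0,T];L^2(\Om))}\le C_E\|y\|_{W(0,T)}$ with a constant independent of $y$. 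Applying this inequality to differences $y_m-y_k$ of a smooth approximating sequence shows that $\{y_m\}$ is Cauchy in $C([0,T];L^2(\Om))$, so the limit $y$ belongs to $C([0,T];L^2(\Om))$ and the embedding constant is preserved in the limit. This proves claim (1).

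Passing to the limit in the displayed integral identity for the $y_m$ (the left side converges by the embedding, the right side by the convergence $y_m\to y$ in $L^2(0,T;H^1(\Omega))$ and $y_m'\to y'$ in $L^2(0,T;H^1(\Omega)^*)$) establishes the identity for every $y\in W(0,T)$ and all $0\le s\le t\le T$. Specialising $s=0$, $t=T$ yields (3). Since the identity holds on every subinterval and $t\mapsto\int_0^t\langle y'(\tau),y(\tau)\rangle\,d\tau$ is absolutely continuous with almost-everywhere derivative $\langle y'(t),y(t)\rangle$, statement (2) follows by the definition of a weak derivative. The main obstacle is the density step: one must verify that the mollification procedure really converges in the $H^1(\Omega)^*$-norm for the time derivative, which requires a careful extension of $y$ past the endpoints so that the distributional derivative of the extended function coincides with the Bochner-extension of $y'$ there; all remaining steps are then routine limit passages.
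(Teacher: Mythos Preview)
Your proposal is correct and follows the standard density-and-mollification argument for the Gelfand triple $H^1(\Omega)\hookrightarrow L^2(\Omega)\hookrightarrow H^1(\Omega)^*$. Note, however, that the paper does not actually prove this theorem: it simply cites Evans (Section~9.3) for parts (1) and (2) and Gajewski et al.\ (Theorem~IV.1.17) for part (3), so there is no ``paper's own proof'' to compare against beyond these references---and your outline is precisely the approach taken in those sources.
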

The next result provides a means to ``restrict'' elements of $H^1(\Om)$ to the boundary of $\Om$. The proof can be found in Evans \cite[Sec.~5.5]{evans}.
\begin{theorem}
\label{sa:spursatz}
 \textbf{\emph{(Trace Theorem)}}
 There is a linear and continuous map $\tau:H^1(\Om)\to L^2(\Gamma)$ that restricts continuous functions $y\in H^1(\Om)\cap C(\bar{\Om})$ to the boundary, i.e. $(\tau y)(x) =y(x)$ for all $x\in\Gamma$. The continuity of $\tau$ implies the existence of a constant $c_\tau>0$, depending solely on $\Om$, with the property $\|\tau y\|_{L^2(\Gamma)}\leq c_\tau\|y\|_{H^1(\Om)}$ for all $y\in H^1(\Om)$. 
\end{theorem}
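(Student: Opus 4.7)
The plan is the classical localize–flatten–integrate proof, completed by a density argument. The first step is to choose a finite open cover $\{U_i\}$ of $\overline{\Om}$ together with a subordinate partition of unity $\{\varphi_i\}$ so that each $U_i$ either lies inside $\Om$ (contributing nothing to the boundary) or meets $\Ga$ in a piece that, after a bi-Lipschitz change of coordinates, becomes a flat piece of $\{x_n=0\}$ with $U_i\cap\Om$ mapped into the half-space $\{x_n>0\}$. Bi-Lipschitz maps preserve $H^1$ up to multiplicative constants (via Rademacher's theorem and the Sobolev chain rule), so it suffices to prove the estimate in the half-space model and to sum the local contributions weighted by the $\varphi_i$.

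The second step is the model estimate for a smooth function $y\in C^\infty(\overline{\Om})$ after transport to the half-space. Fix a smooth cutoff $\zeta:[0,\infty)\to[0,1]$ with $\zeta(0)=1$ and compact support. For each transverse coordinate $x'$ the fundamental theorem of calculus yields
\begin{equation*}
|y(x',0)|^2 = -\int_0^\infty \partial_{x_n}\bigl(\zeta(x_n)\,|y(x',x_n)|^2\bigr)\,dx_n,
\end{equation*}
and expanding the derivative together with $2|ab|\leq a^2+b^2$ gives
\begin{equation*}
|y(x',0)|^2 \leq C\int_0^\infty \bigl(|y(x',x_n)|^2+|\partial_{x_n}y(x',x_n)|^2\bigr)\,dx_n.
\end{equation*}
Integrating over $x'$, pulling back through the coordinate change, multiplying by $\varphi_i$, and summing over $i$ yields the estimate $\|y\|_{L^2(\Ga)}\leq c_\tau\,\|y\|_{H^1(\Om)}$ with a constant $c_\tau$ depending only on $\Om$ (through the cover, the Lipschitz data, the $\varphi_i$, and $\zeta$).

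The third step is the extension by density. Since $\Om$ has a Lipschitz boundary, $C^\infty(\overline{\Om})$ is dense in $H^1(\Om)$. For $y\in H^1(\Om)$, choose $y_k\in C^\infty(\overline{\Om})$ with $y_k\to y$ in $H^1(\Om)$; applying the smooth estimate to the differences $y_k-y_\ell$ shows that $(y_k|_\Ga)_k$ is Cauchy in $L^2(\Ga)$, and I define $\tau y$ as its limit. Linearity and the norm bound pass to the limit. If in addition $y\in H^1(\Om)\cap C(\overline{\Om})$, the approximating sequence may be taken to converge uniformly on $\overline{\Om}$, so $\tau y$ coincides almost everywhere with the pointwise restriction $y|_\Ga$, giving the final clause of the theorem.

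The main obstacle is the first step: carrying out the boundary flattening rigorously when $\Ga$ is merely Lipschitz rather than $C^1$. One must verify that a bi-Lipschitz change of variables is a bounded isomorphism on $H^1$ with constants controlled only by the Lipschitz data, and that the patched local trace estimates produce a constant depending solely on $\Om$. Once this geometric infrastructure is in place, the integration identity and the density extension are both routine.
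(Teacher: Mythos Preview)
The paper does not prove this theorem at all; it simply states that the proof can be found in Evans, Section~5.5. Your sketch is exactly the classical argument given there---localize with a partition of unity, flatten the Lipschitz boundary by a bi-Lipschitz change of variables, derive the model half-space estimate via the fundamental theorem of calculus, and extend by density of $C^\infty(\overline{\Om})$ in $H^1(\Om)$---so in content the approaches coincide.

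One small point worth tightening: in your final clause you assert that for $y\in H^1(\Om)\cap C(\overline{\Om})$ the smooth approximants may be taken to converge uniformly on $\overline{\Om}$. This is true but not automatic from Meyers--Serrin alone; the clean justification is to use the $H^1$ extension operator available for Lipschitz domains to extend $y$ to a compactly supported element of $H^1(\mathbb{R}^n)\cap C(\mathbb{R}^n)$ and then mollify, obtaining simultaneous convergence in $H^1$ and uniformly on compacta. With that detail supplied, the argument is complete.
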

\subsection{Weak formulation}
Initial boundary value problems like \eqref{eq:zustand} are usually solved in a weakened form, i.e. the requirements for the solution are relaxed. Also some numerical methods are designed to find weak solutions (cf. Galerkin's method in the proof of Thm.~\ref{satz:ex_allgemein}). For a one-dimensional initial boundary value problem, Tr\"oltzsch \cite{tr02} derives a weakened formulation that ensures that weak and classical solutions in $C^2(\bar{Q}_T)^s$ coincide as soon as the latter exist. In the following, we will adapt his argumentation to the $s$-dimensional system based on \eqref{eq:zustand} and on the assumptions of Sec.~\ref{sec:generalassumptions}. 

Let $w\in C^1(\bar{Q}_T)^s$ be a vector of test functions. As a first step, the original differential equation, evaluated in $(x,t)\in\Om\times[0,T]$, is multiplied by $w_j(x,t)$. By integrating with respect to $\Om$
 we obtain
 \begin{align*}
(\partial_t y_j(t),w_j(t))_{\Om}+(\vec v(t)\cdot\nabla y_j(t),w_j(t))_{\Om}-({\rm div}(&\kappa_j(t)\nabla  y_j(t)),w_j(t))_{\Om}\\
&+(d_j(y,.\,,t),w_j(t))_{\Om} = 0
\end{align*}
for every $j=1,\ldots,s$. In order to relax the requirements for $y_j$ the  
temporal derivative $y_j^{\prime}$ (regarded as an abstract function) is understood as a functional in $H^1(\Omega)^\ast$, i.e.
\begin{displaymath}
(\partial_t y_j(t),w_j(t))_{\Om}=\langle y_j^{\prime}(t),w_j(t)\rangle_{H^1(\Omega)^\ast}.
\end{displaymath}
The third summand is transformed by partial integration based on Gauss' divergence theorem. 
Inserting the boundary condition we obtain
\begin{align*}
-\int_\Omega{\rm div}(\kappa_j\nabla  y_j)w_jdx&= \int_\Omega(\kappa_j\nabla  y_j\cdot\nabla w_j)dx - \int_\Gamma(\nabla y_j\cdot(\kappa_j\eta))w_jds\\
&=\int_\Omega(\kappa_j\nabla  y_j\cdot\nabla w_j)dx + \int_\Gamma b_j(y,s,t)w_jds.
\end{align*}
In the integrands, we generally omitted the arguments $(x,t)$ and $(s,t)$, respectively.

All linear summands are subsumed under the time-dependent bilinear form $B:H^1(\Om)^s\times H^1(\Om)^s\times[0,T]\to\mathbb{R}$ given by $B(u,v;t):=\sum_{j=1}^sB_j(u_j,v_j;t)$ with components defined by
\begin{equation*}
 B_j(u_j,v_j;t):=\int_\Om(\kappa_j(t)\nabla u_j\cdot\nabla v_j)dx+  \int_\Om(\vec v(t)\cdot\nabla u_j)v_jdx.
 \end{equation*}
Later, we apply $B$ mostly to abstract functions $\alpha,\beta\in L^2(0,T;H^1(\Om))^s$ evaluated in a fixed $t$. In this case we will write $B(\alpha,\beta;t)$ instead of $B(\alpha(t),\beta(t);t)$. 

The previous steps lead to the weak formulation
\begin{align*}
 \langle y^{\prime}_j(t),w_j(t)\rangle_{H^1(\Omega)^\ast} + B_j(y_j,w_j;t) &+ (d_j(y,.\,,t),w_j(t))_{\Om}
 + (b_j(y,.\,,t),w_j(t))_\Gamma=0
 \end{align*}
for all $t\in[0,T]$ and all test functions. We obtain a weak formulation for the $s$-dimensional problem by integrating with respect to time and summing up the equations for $1,\dotsc,s$. 

The summands of $B$ are well-defined as long as $y,w\in L^2(0,T; H^1(\Om))^s$. The derivative with respect to time has to satisfy $y^{\prime}\in L^2(0,T; H^1(\Om)^*)^s$. Thus, $W(0,T)^s$ turns out to be an adequate solution space. 

Since $d(y,.\,,.)\in L^2(0,T; L^2(\Om))^s$ and $b(y,.\,,.)\in L^2(0,T; L^2(\Ga))^s$ the test function is required to be an element of $L^2(0,T; H^1(\Om))^s$. Thus, instead of a classical solution $y\in C^2(\bar{Q}_T)^s$ of the initial boundary value problem~\eqref{eq:zustand} we search for $y\in W(0,T)^s$ fulfilling
\begin{align}\label{eq:schwach_speziell}
\int_0^T\{\langle y^{\prime}(t),w(t)\rangle +B(y,w;t) +(d(y,.\,,t),w(t))_{\Om^s}
 + (b(y,.\,,t),w(t))_{\Gamma^s}\}dt=0
 \end{align}
for all test functions $w\in L^2(0,T;H^1(\Om))^s$ and the initial value condition $y(0)=y_0$.
Because of Thm.~\ref{satz:hauptsatz}(1) it is possible to evaluate the weak solution in $t=0$.

At the end of this section, we prove some important statements concerning the bilinear form $B$.
\begin{lemma} The following properties hold for all $y,v\in H^1(\Om)^s$ and almost all $t\in[0,T]$.
 \label{hi:bilevans}
\begin{enumerate}
  \item There is a constant $C_B>0$ independent of $t,y,v$ such that 
\begin{equation*}
 |B(y,v;t)|\leq C_B\|y\|_{H^1(\Om)^s}\|v\|_{H^1(\Om)^s}.
\end{equation*}
  \item 
$\kappa_{\min}\|y\|^2_{H^1(\Om)^s}\leq B(y,y;t)+\kappa_{\min}\|y\|^2_{L^2(\Om)^s}$
  \item $B$ is monotone, i.e. 
\begin{equation*}
 B(y,y-v;t)-B(v,y-v;t)\geq0.
\end{equation*}
\end{enumerate}
\end{lemma}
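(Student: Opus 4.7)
The backbone of all three claims is a single cancellation: the convective form $\int_\Om(\vec v(t)\cdot\nabla u)u\,dx$ vanishes for every $u\in H^1(\Om)$. Indeed, $(\vec v\cdot\nabla u)u=\tfrac12\vec v\cdot\nabla(u^2)$, so integration by parts together with $\operatorname{div}\vec v(t)=0$ in $\Om$ and $\vec v(t)\cdot\eta=0$ on $\Ga$ kills both the resulting volume and boundary terms. I would establish this identity once and then use it throughout.

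For (1), I would estimate $B$ summand by summand. The diffusive part is controlled by $\kappa_{\max}\|\nabla y_j\|_{L^2(\Om)}\|\nabla v_j\|_{L^2(\Om)}$ via Cauchy--Schwarz. The convective part is more delicate, because $\vec v(t)$ is only assumed to lie in $H^1(\Om)^n$, not in $L^\infty(\Om)^n$. Since $n\leq 3$, I would invoke the Sobolev embedding $H^1(\Om)\hookrightarrow L^4(\Om)$, whose constant depends only on $\Om$ thanks to the Lipschitz boundary, and apply H\"older's inequality with exponents $(4,2,4)$. Summing over $j=1,\dotsc,s$ and absorbing $\vec v_{\max}$, $\kappa_{\max}$ and the embedding constant into one $C_B$ yields the bound, independent of $t$, $y$, $v$.

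For (2), the choice $v=y$ makes every convective contribution disappear by the initial identity, leaving $B(y,y;t)=\sum_{j=1}^s\int_\Om\kappa_j|\nabla y_j|^2\,dx\geq\kappa_{\min}\|\nabla y\|^2_{L^2(\Om)^{ns}}$. Adding $\kappa_{\min}\|y\|^2_{L^2(\Om)^s}$ on both sides produces the full $H^1$-norm on the left. For (3), bilinearity of $B(\cdot,\cdot;t)$ gives $B(y,y-v;t)-B(v,y-v;t)=B(y-v,y-v;t)$, which is nonnegative by (2). The only genuine obstacle is the uniformity of $C_B$ in (1): a crude $L^\infty$-bound on $\vec v$ is not available, which is what forces the Sobolev embedding argument.
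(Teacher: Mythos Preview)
Your proof is correct and follows essentially the same route as the paper: the cancellation $\int_\Om(\vec v\cdot\nabla u)\,u\,dx=0$ (which the paper isolates as a separate auxiliary lemma) handles (2) and hence (3), and a Sobolev embedding handles the convective bound in (1). The only cosmetic difference is that the paper splits the convective integral via Cauchy--Schwarz followed by H\"older with exponents $(3,6)$, using $H^1(\Om)\hookrightarrow L^3(\Om)$ for $\vec v$ and $H^1(\Om)\hookrightarrow L^6(\Om)$ for $v$, whereas you use the triple H\"older split $(4,2,4)$; both are valid for $n\leq3$.
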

\begin{proof}Since $B$ is defined by a sum of $s$ analogous components it suffices to confine the proof to the case $s=1$. 

Let $y,v\in H^1(\Om)$ and $t\in[0,T]$. In order to prove the first statement we obtain by means of the Cauchy-Schwarz inequality in $L^2(\Om)^n$
\begin{align*}
&|\int_\Om(\kappa(t)\nabla y\cdot\nabla v)dx|\leq
\kappa_{\max}|(\nabla y,\nabla v)_{\Om^n}|\leq\kappa_{\max}\|\nabla y\|_{\Om^n}\|\nabla v\|_{\Om^n}\quad\text{and}\\
& |\int_\Om(\vec v(t)\cdot\nabla y)vdx|=|(\vec v(t)v,\nabla y)_{\Om^n}| \leq \|\vec v(t)v\|_{\Om^n}\|\nabla y\|_{\Om^n}
\end{align*}
for all $t$ outside of some measure-zero set. For every $i=1\dotsc,n$, H\"older's inequality with the exponents $p=\frac{3}{2}$ and $q=3$ provides
\begin{align*}
\|\vec v_i(t)v\|_{\Om}=(\int_\Om \vec v_i(t)^2v^2dx)^\frac{1}{2}&
\leq(\int_\Om \vec v_i(t)^3dx)^\frac{1}{3}(\int_\Om v^6dx)^\frac{1}{6}
= \|\vec v_i(t)\|_{L^3(\Om)}\|v\|_{L^6(\Om)}
\end{align*}
and therefore
\begin{displaymath}
\|\vec v(t)v\|_{\Om^n}=(\sum\limits_{i=1}^n\|\vec v_i(t)v\|_{\Om}^2)^{\frac{1}{2}}\leq (\sum\limits_{i=1}^n\|\vec v_i(t)\|_{L^3(\Om)}^2\|v\|_{L^6(\Om)}^2)^{\frac{1}{2}}= 
\|\vec v(t)\|_{L^3(\Om)^n}\|v\|_{L^6(\Om)}.
\end{displaymath}
For each $r\in\{3,6\}$, there is a constant $c_{r}>0$ with $\|w\|_{L^r(\Om)}\leq c_{r}\|w\|_{H^1(\Om)}$ for all $w\in H^1(\Om)$ because of the continuous embedding $H^1(\Om)\hookrightarrow L^r(\Om)$. Taking into account the definition of the norm in $H^1(\Om)$ this leads to
\begin{align*}
|\!\!\int_\Om\!\!(\vec v(t)\cdot\nabla y)vdx|&\leq c_3\|\vec v(t)\|_{H^1(\Om)^n}c_{6}\|v\|_{H^1(\Om)}\|\nabla y\|_{\Om^n}\!\leq \!\vec v_{\max}c_{3}c_6\|v\|_{H^1(\Om)}\|y\|_{H^1(\Om)}
\end{align*}
provided that $t$ does not belong to a certain measure-zero set. Combining the results we obtain
\begin{displaymath}
|B(y,v;t)|\leq C_B\|y\|_{H^1(\Om)}\|v\|_{H^1(\Om)}
\end{displaymath}
for almost all $t\in[0,T]$ with the constant $C_B:=\kappa_{\max}+\vec v_{\max}c_{3}c_6$ .

For a proof of the second statement we observe primarily that the second summand of $B(y,y;t)$ vanishes according to Lemma~\ref{hi:englischeslemma}, applied to $v:=\vec v(t)$ and $w:=y$. For almost every $t\in[0,T]$ we estimate the first summand by
\begin{displaymath}
\int_\Om(\kappa(t)\nabla  y\cdot\nabla y)dx\geq \kappa_{\min}(\nabla  y,\nabla y)_{\Omega^n}= \kappa_{\min}\|\nabla  y\|_{\Omega^n}^2.
\end{displaymath}
Therefore, we obtain $ \kappa_{\min}\|\nabla  y\|_{\Omega^n}^2\leq B(y,y;t)$. The assertion of the lemma follows from adding $\kappa_{\min}\|y\|_{\Om}^2$ on both sides of this inequality.

In order to prove (3) we apply (2) with $y-v$ instead of $y$. Subtracting $\kappa_{\min}\|y-v\|_{\Om}^2$ on both sides we obtain 
\begin{displaymath}
0\leq\kappa_{\min}\|\nabla (y-v)\|^2_{\Om^n}\leq B(y-v,y-v;t).
\end{displaymath}
This corresponds to the assertion of the lemma since $B$ is bilinear.
\end{proof}
At last, we add an auxiliary lemma for the proof above.
\begin{lemma}
 \label{hi:englischeslemma}
 Let $v\in H^1(\Om)^n$ with ${\rm div}v=0$ in $H^1(\Om)$ and $v\cdot\eta=0$ in $L^2(\Gamma)$. Hence 
 \begin{displaymath}
\int_\Om (v\cdot\nabla w)wdx=0\quad\text{for all $w\in H^1(\Om)$.}
\end{displaymath}
\end{lemma}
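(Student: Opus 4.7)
The plan is to exploit the product-rule identity $2(v\cdot\nabla w)w = v\cdot\nabla(w^2)$ and then integrate by parts, using the hypotheses ${\rm div}\,v = 0$ and $v\cdot\eta = 0$. I would first establish the identity for a smooth $w\in C^\infty(\bar{\Om})$, where $w^2$ is also smooth and Gauss' divergence theorem yields
\[
\int_\Om (v\cdot\nabla w)\,w\,dx \;=\; \tfrac{1}{2}\int_\Om {\rm div}(w^2 v)\,dx \;-\; \tfrac{1}{2}\int_\Om w^2\,{\rm div}\,v\,dx \;=\; \tfrac{1}{2}\int_\Ga w^2\,(v\cdot\eta)\,ds,
\]
which vanishes by the two hypotheses on $v$. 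If one is uneasy about applying the divergence theorem directly to the $H^1$-vector field $w^2 v$, one may first approximate $v$ by smooth vector fields in $H^1(\Om)^n$ (or appeal to the Gauss--Green formula for $H({\rm div};\Om)$-fields with $L^2$-normal trace); the limits are immediate because the smooth approximants inherit the divergence-free/tangential conditions only in the $L^2$-sense that is actually used.

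The second step is to remove the smoothness assumption on $w$ by density. Since $\Om$ has a Lipschitz boundary, $C^\infty(\bar{\Om})$ is dense in $H^1(\Om)$, so I would pick an approximating sequence $(w_k)\subset C^\infty(\bar{\Om})$ with $w_k\to w$ in $H^1(\Om)$ and decompose
\[
\int_\Om (v\cdot\nabla w_k)w_k\,dx - \int_\Om (v\cdot\nabla w)w\,dx = \int_\Om v\cdot\nabla(w_k - w)\,w_k\,dx + \int_\Om (v\cdot\nabla w)\,(w_k - w)\,dx.
\]
Each summand can be controlled by H\"older's inequality with exponents $(3,2,6)$ (which satisfy $\tfrac{1}{3}+\tfrac{1}{2}+\tfrac{1}{6}=1$). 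Because $n\leq 3$, the Sobolev embedding $H^1(\Om)\hookrightarrow L^6(\Om)$ applies, so $v\in L^3(\Om)^n$ and the $L^6$-norms of $w_k$ stay uniformly bounded while $w_k\to w$ in $L^6(\Om)$. Combined with $\nabla(w_k - w)\to 0$ in $L^2(\Om)^n$, this makes both summands vanish in the limit and the identity extends to all $w\in H^1(\Om)$.

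The only real obstacle is that $w^2$ is in general not an element of $H^1(\Om)$ when $w\in H^1(\Om)$, so one cannot simply test a Gauss--Green formula against $\phi = w^2$. This is precisely what forces the density detour. Once the smooth approximation is in place the remainder is a routine H\"older estimate that crucially uses $H^1\hookrightarrow L^6$, and hence the dimensional restriction $n\leq 3$ fixed in Sec.~\ref{sec:generalassumptions}.
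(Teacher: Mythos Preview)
Your proof is correct and follows the same underlying idea as the paper: use the product rule to rewrite the integrand in divergence form and then apply Gauss' theorem together with ${\rm div}\,v=0$ and $v\cdot\eta=0$. The paper computes $\int_\Om {\rm div}(vw)\,w\,dx$ in two ways (once via $c=w$, $x=v$, once via $c=w$, $x=vw$) and subtracts, whereas you go straight to $2(v\cdot\nabla w)w = v\cdot\nabla(w^2)$; these are algebraically equivalent manipulations.

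The one genuine difference is that the paper applies the product rule and the divergence theorem directly for $w\in H^1(\Om)$, in particular treating $vw$ as an element of $H^1(\Om)^n$ and $vw^2$ as a field to which Gauss' theorem applies, without further comment. You instead carry out the computation for smooth $w$ and then pass to the limit via density of $C^\infty(\bar\Om)$ in $H^1(\Om)$, controlling the error terms with the H\"older $(3,2,6)$ splitting and the embedding $H^1\hookrightarrow L^6$ (valid since $n\le 3$). This density step is a welcome rigorous refinement: for general $v,w\in H^1(\Om)$ the product $vw$ need not lie in $H^1(\Om)^n$, so the paper's direct application of the product rule with $x=vw$ is somewhat formal. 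Your route makes explicit where the dimensional restriction $n\le 3$ enters and closes that gap cleanly.
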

\begin{proof}
Let $w \in H^1(\Om)$. For all $c\in H^1(\Om)$ und $x\in H^1(\Om)^n$ we prove the product rule
\begin{align*}
 {\rm div}(cx)=\!\sum\limits_{i=1}^n\partial_i(cx_i)=\!\sum\limits_{i=1}^n(c(\partial_ix_i)+x_i(\partial_ic))
 =c\sum\limits_{i=1}^n\partial_ix_i+\sum\limits_{i=1}^nx_i(\partial_ic)
=c\,{\rm div}x+x\cdot\nabla c.
\end{align*}
On the one hand, applying this formula to $c:=w$ and $x:=v$ we obtain
 \begin{align*}\label{eq:engllemfuerlemevans}
\int_\Om\!\!{\rm div}(vw)wdx=\!\int_\Om\!\! w^2{\rm div}vdx+\!\int_\Om \!\!(v\cdot\nabla w)wdx=\!\int_\Om \!\!(v\cdot\nabla w)wdx.
\end{align*}
The summand with $w^2$ vanishes because of the assumption about the divergence of $v$. On the other hand, the same formula applied to $c:=w$ and $x:=vw$ yields an integral over the divergence of $(vw^2)$ which can be transformed into a boundary integral by virtue of Gauss' divergence theorem. We obtain
\begin{align*}
\int_\Om{\rm div}(vw)wdx&=
\int_\Om{\rm div} (vw^2)dx-\int_\Om ((vw)\cdot\nabla w)dx
\\
&=\int_\Ga(v\cdot\eta)w^2ds-\int_\Om (v\cdot\nabla w)wdx
=
-\int_\Om (v\cdot\nabla w)wdx.
\end{align*}
The first integral in the second line vanishes because the product of $v$ with the outward-pointing normal $\eta$ is assumed to be zero. Subtracting both of the results we arrive at 
\begin{displaymath}
2\int_\Om(v\cdot\nabla w)wdx=\int_\Om {\rm div}(vw)wdx-\int_\Om {\rm div}(vw)wdx=0,
\end{displaymath}
the statement of the lemma.
\end{proof}
%
%
%
\section{An existence and uniqueness result with Galerkin's method}\label{sec:ex_galerkin}
In the following, we investigate the unique solvability of the initial value problem in Eq.~\eqref{eq:schwach_speziell}. Problems of this kind were treated in literature with different methods according to the assumptions about the reaction terms. Banach's Fixed Point Theorem is used to solve nonlinear, Lipschitz continuous problems \cite{evans,zei85}. Galerkin's method is applied to monotone or linear reaction terms \cite{la68,tr02,gaj67} and to pseudo-monotone and coercive operators \cite{ru10}. Raymond et al. \cite{ray98} follow an alternative approach assuming a boundedness condition from below. However, this condition, just like the coercivity, seldom applies to the specific reaction terms of marine ecosystem models. 

Due to the frequent appearance of monotone and Lipschitz continuous reaction terms in actual models (cf. Sec.~\ref{sub:saturation}) the above-mentioned methods by Galerkin and Banach seem most adequate for their investigation. We will follow both approaches since, as we will see below, each of them has its individual benefits. 

In either case, we extend the respective standard proof from literature with the objective of allowing an arbitrary number of model equations as well as reaction terms with both Lipschitz continuous and monotone parts.

In the following, we state a first existence and uniqueness theorem and give its proof by means of Galerkin approximation. The preceding proposition states important estimates for weak solutions and thus contributes to the proofs in both the current and the following section. The latter contains a second existence and uniqueness result proved by Banach's Fixed Point Theorem. 

To increase the range of application of our results we investigate a generalization of Eq.~\eqref{eq:schwach_speziell}. To this end, we need the following assumptions.

Let $Y\hookrightarrow L^2(0,T;L^2(\Om))^s$ be a Banach space with $W(0,T)^s\hookrightarrow Y$.  For $i \in\{1,2\}$, we assume the operators $F_i:Y\to L^2(0,T;H^1(\Om)^*)^s$ to be generated by an indexed family of operators $F_i(t):H^1(\Om)^s\to (H^1(\Om)^*)^s$ for almost every $t\in[0,T]$, i.e.
\begin{align}\label{eq:generating}
\langle F_i(y),v\rangle_{L^2(0,T;H^1(\Om)^*)^s}=\int_0^T\!\!\langle F_i(y(t)),v(t)\rangle dt
\end{align}
for all $y\in Y,v\in L^2(0,T;H^1(\Om))^s$. Shortly, we wrote $F_i(y(t))$ instead of $F_i(t)(y(t))$ as we will do throughout this section. Let furthermore $f\in L^2(0,T;H^1(\Om)^*)^s$.

In the following, we search for a solution $y\in W(0,T)^s$ of the initial value problem  
\begin{align}\label{eq:schwach_allg}
y^{\prime} +\int_0^T B(y,.\,;t)dt + F_1(y)+ F_2(y)&=f\\
y(0)&=y_0.\nonumber
 \end{align}
 Due to the assumptions, elements of $W(0,T)^s$ belong to the domain of $F_1$ and $F_2$. The next lemma
 will explain in which way Eq.~\eqref{eq:schwach_allg} generalizes the initial value problem in Eq.~\eqref{eq:schwach_speziell}. In particular, we show in which way the original reaction terms $d$ and $b$ can be identified with the abstract operators $F_i:Y\to L^2(0,T;H^1(\Om)^*)^s$.
\begin{lemma}\label{lem:db_erz_funkt} 
Let $Y:=L^2(0,T;L^2(\Om))^s$. The operators $\tilde{d},\tilde{b}:Y\to L^2(0,T;H^1(\Om)^*)^s$ with
\begin{displaymath}
\tilde{d}(y):w\mapsto\int_0^T(d(y,.\,,t),w(t))_{\Om^s}dt\quad\text{ and } \quad \tilde{b}(y):w\mapsto\int_0^T(b(y,.\,,t),\tau w(t))_{\Ga^s}dt
\end{displaymath}
for all $y\in Y$ are well-defined and generated (in the sense of Eq.~\eqref{eq:generating}) by the indexed families $(\tilde{d}(t))_t$ and $(\tilde{b}(t))_t$ of operators $\tilde{d}(t),\tilde{b}(t):H^1(\Om)^s\to (H^1(\Om)^*)^s$ with
 \begin{displaymath}
\tilde{d}(t)(v):z\mapsto(d(t)(v),z)_{\Om^s}\quad\text{ and } \quad \tilde{b}(t)(v):z\mapsto(b(t)(v),\tau z)_{\Ga^s}.
\end{displaymath}
Here, $\tau$ is the map of Thm.~\ref{sa:spursatz}.
\end{lemma}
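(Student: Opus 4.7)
The proof is a bookkeeping exercise that unfolds the definitions; the only technical tool beyond Cauchy-Schwarz is the Trace Theorem, which takes care of the boundary integral in $\tilde b$. My plan is to proceed in three steps, treating $\tilde d$ and $\tilde b$ in parallel.

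First, for fixed $t\in[0,T]$ and $v\in H^1(\Om)^s$, I would verify that $\tilde d(t)(v)$ and $\tilde b(t)(v)$ are continuous linear functionals on $H^1(\Om)^s$. Linearity in $z$ is immediate from the bilinearity of the inner product and the linearity of the trace $\tau$. For continuity, Cauchy-Schwarz in $L^2(\Om)^s$ combined with the trivial bound $\|z\|_{\Om^s}\leq\|z\|_{H^1(\Om)^s}$ gives $\|\tilde d(t)(v)\|_{(H^1(\Om)^*)^s}\leq\|d(t)(v)\|_{\Om^s}$, and Thm.~\ref{sa:spursatz} gives $\|\tilde b(t)(v)\|_{(H^1(\Om)^*)^s}\leq c_\tau\|b(t)(v)\|_{\Ga^s}$.

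Next, for $y\in Y$ I would establish that the abstract functions $t\mapsto\tilde d(t)(y(t))$ and $t\mapsto\tilde b(t)(y(t))$ lie in $L^2(0,T;(H^1(\Om)^*)^s)$. Squaring the pointwise bounds above and integrating in $t$ yields
\[
\int_0^T\|\tilde d(t)(y(t))\|_{(H^1(\Om)^*)^s}^2\,dt\leq\|d(y)\|_{Q_T^s}^2\quad\text{and}\quad\int_0^T\|\tilde b(t)(y(t))\|_{(H^1(\Om)^*)^s}^2\,dt\leq c_\tau^2\|b(y)\|_{\Sigma^s}^2,
\]
both finite by the standing assumption that $d$ and $b$ map into $L^2(Q_T)^s$ and $L^2(\Sigma)^s$, respectively. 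The generating identity~\eqref{eq:generating} then amounts to unfolding definitions: the defining integral $\tilde d(y)(w)=\int_0^T(d(y,\cdot,t),w(t))_{\Om^s}\,dt$ equals $\int_0^T\langle\tilde d(t)(y(t)),w(t)\rangle\,dt$ by the very definition of $\tilde d(t)$, and analogously for $\tilde b$ with $\tau w(t)$ inserted.

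The only point that is not entirely cosmetic is Bochner-measurability of the $(H^1(\Om)^*)^s$-valued integrands in the second step, since membership in $L^2(0,T;H^1(\Om)^*)^s$ requires more than a pointwise-defined duality value for each $t$. I expect this to be handled by composing the measurable representatives $t\mapsto d(y)(t)\in L^2(\Om)^s$ and $t\mapsto b(y)(t)\in L^2(\Ga)^s$ with the continuous linear embeddings of those spaces into $(H^1(\Om)^*)^s$ supplied by Cauchy-Schwarz and the Trace Theorem, so I do not foresee a serious obstacle; the real content of the lemma is the careful identification rather than any deep estimate.
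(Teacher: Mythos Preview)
Your proposal is correct and mirrors the paper's argument almost exactly: the paper also first bounds $\|\tilde d(t)(v)\|_{(H^1(\Om)^*)^s}\leq\|d(t)(v)\|_{\Om^s}$ and $\|\tilde b(t)(v)\|_{(H^1(\Om)^*)^s}\leq c_\tau\|b(t)(v)\|_{\Ga^s}$ via Cauchy--Schwarz and the Trace Theorem, then integrates these pointwise bounds in $t$ to obtain membership in $L^2(0,T;H^1(\Om)^*)^s$. The only cosmetic difference is that where you discuss Bochner-measurability explicitly, the paper instead invokes the identification $(L^2(0,T;H^1(\Om))^s)^*\cong L^2(0,T;H^1(\Om)^*)^s$ from \cite[Thm.~IV.1.14]{gaj67}, which absorbs that issue.
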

\begin{proof} The operators $\tilde{d},\tilde{b}$ are generated by $(\tilde{d}(t))_t,(\tilde{b}(t))_t$ because of the assumptions of Sec~\ref{sec:generalassumptions} about $d,b$ being defined by $(d(t))_t,(b(t))_t$. Thus, it remains to be shown that the operators are well-defined. First, $H^1(\Om)^s$ is an admissible domain of definition for the generating operators because it is a subspace of $L^2(\Om)^s$. Further, assuming $v\in H^1(\Om)^s$, we have to show that $\tilde{d}(t)(v)$ and $\tilde{b}(t)(v)$ are elements of $(H^1(\Om)^*)^s$. It suffices to investigate their boundedness since they are both obviously linear. Assuming $z\in H^1(\Om)^s$, we conclude with the Cauchy-Schwarz inequality in $L^2(\Ga)^s$ and the Trace Theorem~\ref{sa:spursatz} 
\begin{align*}
\langle \tilde{b}(t)(v),z\rangle=(b(t)(v),\tau z)_{\Ga^s}\leq\|b(t)(v)\|_{\Ga^s}\|\tau z\|_{\Ga^s}
\leq\|b(t)(v)\|_{\Ga^s}c_{\tau}\|z\|_{H^1(\Om)^s}.
\end{align*}
Thus, $\|\tilde{b}(t)(v)\|_{(H^1(\Om)^*)^s}\leq\|b(t)(v)\|_{\Ga^s}c_{\tau}$. The upper bound is finite since $b(t)(v)\in L^2(\Ga)^s$. Similarly, we obtain $\|\tilde{d}(t)(v)\|_{(H^1(\Om)^*)^s}\leq\|d(t)(v)\|_{\Om^s}<\infty$. 

The functionals on the spaces involving time are integrals over the generating functionals. Because of the identification $(L^2(0,T; H^1(\Om))^s)^*\cong L^2(0,T; H^1(\Om)^*)^s$ \cite[Thm.~IV.1.14]{gaj67} it suffices to show that the norm of the generating functionals is quadratically integrable. For $y\in Y$, we estimate
\begin{align*}
\|\tilde{b}(y)\|^2_{L^2(0,T;H^1(\Om)^*)^s}=
\!\int\limits_0^T\!\!\|\tilde{b}(t)(y(t))\|_{(H^1(\Om)^*)^s}^2dt
\leq\!\int\limits_0^T\!\!c_{\tau}\|b(t)(y(t))\|_{\Ga^s}^2dt=
c_{\tau}\|b(y)\|_{\Sigma^s}^2
\end{align*}
and the last expression is finite due to the definition of $b$. An analogous result follows for $\tilde{d}$. Thus, $\tilde{d}(y),\tilde{b}(y)$ are elements of $L^2(0,T;H^1(\Om)^*)^s$.
\end{proof}

%
%
\begin{theorem}\label{satz:ex_allgemein} 
Let the operators $F_i$ be continuous and fulfill the homogeneity condition $F_i(0)=0$. We assume that $F_2$ is monotone, i.e.
\begin{displaymath}
\langle F_2(y(t))-F_2(v(t)),y(t)-v(t)\rangle\geq0,
\end{displaymath}
and that there is a constant $L_2>0$, not depending on $t$, with (see also Appendix A)
\begin{displaymath}
 \|F_2(y(t))\|_{(H^1(\Om)^*)^s}\leq L_2\|y(t)\|_{H^1(\Om)^s},
\end{displaymath}
each for all $y,v\in Y\cap L^2(0,T;H^1(\Om))^s$ and almost all $t\in[0,T]$. Let further $F_1$ fulfill the Lipschitz condition
\begin{displaymath}
\|F_1(y(t))-F_1(v(t))\|_{(H^1(\Om)^*)^s}\leq L_1\|y(t)-v(t)\|_{\Om^s}
\end{displaymath}
for all $y,v\in Y$ and almost all $t\in[0,T]$ with $L_1>0$ independent of $t$.
Moreover, we assume either the embedding $W(0,T)^s\hookrightarrow Y$ to be compact  or one of the conditions
\begin{enumerate}
\item $F_2\neq0$, $L^2(0,T;H^1(\Om))^s\hookrightarrow Y$ and $F_1$ is strongly continuous. 
\item $F_2=0$ and $F_1$ is weakly continuous. 
\end{enumerate}
Then the initial value problem \eqref{eq:schwach_allg} has a unique weak solution $y\in W(0,T)^s$. 
\end{theorem}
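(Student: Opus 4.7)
My plan is to apply Galerkin's method. Since $H^1(\Om)^s$ is separable, I select a countable basis $\{w_k\}_{k\in\N}$ of $H^1(\Om)^s$ which is also orthonormal in $L^2(\Om)^s$, and set $V_m:=\aufspann\{w_1,\dots,w_m\}$. The Galerkin approximants $y_m(t)=\sum_{k=1}^m c_k^m(t)\,w_k$ are determined by the projected equations
\begin{equation*}
(y_m'(t),w_k)_{\Om^s}+B(y_m,w_k;t)+\langle F_1(y_m(t))+F_2(y_m(t)),w_k\rangle=\langle f(t),w_k\rangle\quad(k\leq m),
\end{equation*}
together with $y_m(0)$ being the $L^2(\Om)^s$-projection of $y_0$ onto $V_m$. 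Continuity of $F_1,F_2$ and the measurable $t$-dependence of $B$ make this a Carathéodory ODE system for $(c_k^m)$; local existence is classical, and the a priori bounds derived next extend the solution to $[0,T]$.

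Next I derive uniform-in-$m$ energy bounds by testing with $y_m$ itself and applying Theorem~\ref{satz:hauptsatz}(2) to get $(y_m',y_m)=\tfrac12\tfrac{d}{dt}\|y_m\|_{\Om^s}^2$, Lemma~\ref{hi:bilevans}(2) to obtain $B(y_m,y_m;t)\geq\kappa_{\min}\|y_m\|_{H^1(\Om)^s}^2-\kappa_{\min}\|y_m\|_{\Om^s}^2$, monotonicity of $F_2$ combined with $F_2(0)=0$ to conclude $\langle F_2(y_m),y_m\rangle\geq 0$, and the Lipschitz bound with $F_1(0)=0$ to control $\langle F_1(y_m),y_m\rangle$ by $L_1\|y_m\|_{\Om^s}\|y_m\|_{H^1(\Om)^s}$, absorbed via Young's inequality into the $H^1$-term. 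Gronwall then bounds $(y_m)$ in $L^\infty(0,T;L^2(\Om)^s)\cap L^2(0,T;H^1(\Om))^s$. Testing $y_m'$ against elements of $V_m$ and invoking Lemma~\ref{hi:bilevans}(1) together with the growth bounds on $F_1,F_2$ gives a bound of $y_m'$ in $L^2(0,T;H^1(\Om)^*)^s$, so $(y_m)$ is bounded in $W(0,T)^s$.

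For the passage to the limit I extract a subsequence with $y_m\rightharpoonup y$ in $L^2(0,T;H^1(\Om))^s$, $y_m'\rightharpoonup y'$ in $L^2(0,T;H^1(\Om)^*)^s$, and $F_i(y_m)\rightharpoonup \chi_i$ in $L^2(0,T;H^1(\Om)^*)^s$. The identification of $\chi_1$ splits by hypothesis: under the compact embedding $W(0,T)^s\hookrightarrow\hookrightarrow Y$, $y_m\to y$ strongly in $Y$, and the Lipschitz estimate propagates to $F_1(y_m)\to F_1(y)$ strongly; under assumption (1), strong continuity of $F_1$ on $Y$ gives the same conclusion from $y_m\rightharpoonup y$ in $Y$; under (2), weak continuity of $F_1$ is enough because $F_2\equiv 0$. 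To identify $\chi_2$ in the monotone case I apply Minty's trick: for every $v\in L^2(0,T;H^1(\Om))^s$,
\begin{equation*}
0\leq\int_0^T\!\!\langle F_2(y_m)-F_2(v),y_m-v\rangle\,dt,
\end{equation*}
and the limit of $\int_0^T\langle F_2(y_m),y_m\rangle\,dt$ is computed through the energy identity obtained from the Galerkin equation via Theorem~\ref{satz:hauptsatz}(3), yielding $\int_0^T\langle \chi_2-F_2(v),y-v\rangle\,dt\geq 0$; substituting $v=y-\lambda z$ with arbitrary $z\in L^2(0,T;H^1(\Om))^s$ and sending $\lambda\downarrow 0$ gives $\chi_2=F_2(y)$. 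The initial condition $y(0)=y_0$ follows from Theorem~\ref{satz:hauptsatz}(1) by pairing the limit equation against test functions that do not vanish at $t=0$.

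Uniqueness follows by taking two solutions $y^1,y^2\in W(0,T)^s$, subtracting the equations and testing with $y^1-y^2$: Theorem~\ref{satz:hauptsatz}(2), Lemma~\ref{hi:bilevans}(3), and the monotonicity of $F_2$ discard the $B$- and $F_2$-contributions with the right sign, while the Lipschitz bound on $F_1$ is absorbed by Young's inequality into the $H^1$-term supplied by Lemma~\ref{hi:bilevans}(2), leaving a Gronwall inequality of the form $\tfrac{d}{dt}\|y^1-y^2\|_{\Om^s}^2\leq C\|y^1-y^2\|_{\Om^s}^2$ with zero initial datum. The main obstacle I foresee is the Minty-type identification of $F_2(y)$: because $F_2$ is controlled only by the $H^1$-norm and the three alternative hypothesis sets supply qualitatively different compactness (compact embedding, strong continuity, or weak continuity), one must verify in each case that the energy identity really transfers the limit $\limsup\int\langle F_2(y_m),y_m\rangle\,dt$ into $\int\langle\chi_2,y\rangle\,dt$ without spurious defect terms.
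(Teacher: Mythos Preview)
Your proposal is correct and follows essentially the same route as the paper: Galerkin approximation with a basis orthonormal in $L^2$, local solvability via Carath\'eodory extended globally by a priori energy estimates (the paper packages these estimates into a separate Proposition), weak compactness in $W(0,T)^s$, passage to the limit split into the three alternative cases, Minty's device to identify the monotone limit, and uniqueness via Gronwall after testing with the difference of two solutions. The only organizational difference worth noting is that the paper applies the Minty-type lemma to the combined monotone operator $A(\tilde y)=\int_0^T B(\tilde y,\cdot\,;t)\,dt+F_2(\tilde y)$ rather than to $F_2$ alone, and it handles the $\limsup$ inequality by proving separately that $y_{n_k}(0)\to y_0$ strongly and $y_{n_k}(T)\rightharpoonup y(T)$ weakly in $L^2(\Om)^s$; your sketch via the energy identity and Theorem~\ref{satz:hauptsatz}(3) amounts to the same computation.
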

As announced above, the proof of Theorem~\ref{satz:ex_allgemein} will follow after a proposition about estimates of weak solutions.
\begin{proposition}\label{prop:energy_estimates}
Let $F_i:Y\to L^2(0,T; H^1(\Om)^*)^s$ be operators fulfilling the assumptions of Theorem~\ref{satz:ex_allgemein} and $Z\subseteq H^1(\Om)$ be a closed subspace. Let further $z_1,z_2$ be elements of $W(0,T)^s$ in case $Z= H^1(\Om)$ or else of $H^1(0,T;Z)^s$. Consider the difference $z:=z_1-z_2$ to fulfill
\begin{align}\label{eq:prop_eq}
\langle z^{\prime}(t),v\rangle+B(z,v;t) + \sum_{i=1}^{2} \langle F_{i}(z_1(t))-F_{i}(z_2(t)),v\rangle= \langle f(t),v\rangle
 \end{align}
for all $v\in Z^s$ and almost every $t\in [0,T]$. Hereby, we define $\langle z^{\prime}(t),v\rangle:=(z^{\prime}(t),v)_{\Om^s}$ if $z^{\prime}$ is a function.
Then the estimate
$$\|z\|_{C(\left[0,T\right],L^2(\Omega))^s}+
\|z\|_{L^2(0,T;H^1(\Om))^s}\leq C(\|f\|_{L^2(0,T;H^1(\Om)^*)^s}+\|z(0)\|_{L^2(\Omega)^s})$$
holds with a constant $C>0$ independent of $z, f, F_2$.

In case $z_2=0$ there is another constant $\tilde{C}>0$, independent of $z, f$, with
\begin{displaymath}
\|z^{\prime}\|_{L^2(0,T;H^1(\Om)^*)^s}\leq \tilde{C}(\|f\|_{L^2(0,T;H^1(\Om)^*)^s}+\|z(0)\|_{L^2(\Omega)^s}).
\end{displaymath}
\end{proposition}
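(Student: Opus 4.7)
My plan is to run a standard energy estimate by testing the identity~\eqref{eq:prop_eq} with $v:=z(t)$, which lies in $Z^s$ for almost every $t$. In both regularity regimes the identity $\langle z'(t),z(t)\rangle = \tfrac{1}{2}\tfrac{d}{dt}\|z(t)\|_{\Om^s}^2$ is available: by Theorem~\ref{satz:hauptsatz}(2) when $z\in W(0,T)^s$, and by the classical chain rule for absolutely continuous vector-valued maps when $z\in H^1(0,T;Z)^s$, since in that case $z'$ is a genuine $L^2$-function with values in $H^1(\Om)^s$. Integrating from $0$ to an arbitrary $\tau\in[0,T]$ yields
\begin{displaymath}
\tfrac{1}{2}\|z(\tau)\|_{\Om^s}^2 + \int_0^\tau B(z,z;t)\,dt + \sum_{i=1}^2\int_0^\tau\langle F_i(z_1)-F_i(z_2),z\rangle\,dt = \tfrac{1}{2}\|z(0)\|_{\Om^s}^2 + \int_0^\tau\langle f,z\rangle\,dt.
\end{displaymath}

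The four summands are then handled as follows. \emph{(i)} The $F_2$-term is nonnegative by monotonicity and is simply dropped; this is precisely why the eventual constant does not depend on $F_2$. \emph{(ii)} Coercivity from Lemma~\ref{hi:bilevans}(2) bounds the bilinear form below by $\kappa_{\min}\|z\|_{H^1(\Om)^s}^2-\kappa_{\min}\|z\|_{\Om^s}^2$. \emph{(iii)} The Lipschitz bound for $F_1$ combined with Young's inequality gives $|\langle F_1(z_1)-F_1(z_2),z\rangle|\leq L_1\|z\|_{\Om^s}\|z\|_{H^1(\Om)^s}\leq \varepsilon\|z\|_{H^1(\Om)^s}^2+\tfrac{L_1^2}{4\varepsilon}\|z\|_{\Om^s}^2$. \emph{(iv)} The right-hand side is split analogously as $|\langle f,z\rangle|\leq \varepsilon\|z\|_{H^1(\Om)^s}^2+\tfrac{1}{4\varepsilon}\|f\|_{(H^1(\Om)^*)^s}^2$. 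Choosing $\varepsilon=\kappa_{\min}/4$ and absorbing the $\|z\|_{H^1}^2$-terms into the left-hand side produces an inequality of the form
\begin{displaymath}
\|z(\tau)\|_{\Om^s}^2 + \kappa_{\min}\int_0^\tau\|z\|_{H^1(\Om)^s}^2\,dt \leq \|z(0)\|_{\Om^s}^2 + C_1\int_0^\tau\|z\|_{\Om^s}^2\,dt + C_2\|f\|_{L^2(0,T;H^1(\Om)^*)^s}^2.
\end{displaymath}
Gronwall's lemma applied to $\|z(\tau)\|_{\Om^s}^2$ yields a uniform-in-$\tau$ bound; taking the supremum over $\tau$ gives the $C([0,T];L^2(\Om))^s$-part, and substituting back controls the $L^2(0,T;H^1(\Om))^s$-part, which together establish the first inequality.

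For the derivative estimate in the case $z_2=0$, I would read $z'(t)$ off from~\eqref{eq:prop_eq} as a functional on $H^1(\Om)^s$ and bound it termwise: Lemma~\ref{hi:bilevans}(1) yields $|B(z,v;t)|\leq C_B\|z\|_{H^1(\Om)^s}\|v\|_{H^1(\Om)^s}$, the Lipschitz property combined with $F_1(0)=0$ gives $\|F_1(z(t))\|_{(H^1(\Om)^*)^s}\leq L_1\|z(t)\|_{\Om^s}$, and the boundedness hypothesis delivers $\|F_2(z(t))\|_{(H^1(\Om)^*)^s}\leq L_2\|z(t)\|_{H^1(\Om)^s}$. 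Squaring, integrating over $[0,T]$, and substituting the first part of the proposition closes the bound with a constant $\tilde C$ depending on $C_B,L_1,L_2$ but not on $z,f$. The main technical hurdle is minor but worth being precise about: justifying the differentiation identity for $\|z\|_{\Om^s}^2$ in the $H^1(0,T;Z)^s$-regime, where $Z\neq H^1(\Om)$ and Theorem~\ref{satz:hauptsatz} is not directly applicable; once this is settled by the chain rule for $L^2$-valued derivatives, the rest is routine bookkeeping with Young, coercivity and Gronwall.
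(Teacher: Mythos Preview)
Your argument for the first estimate is essentially the paper's: test with $v=z(t)$, discard the monotone $F_2$-contribution, invoke the coercivity of Lemma~\ref{hi:bilevans}(2), split the $F_1$- and $f$-terms via Young's inequality with $\varepsilon$, absorb, and apply Gronwall. The paper keeps the pointwise differential inequality and uses the differential form of Gronwall rather than integrating up to $\tau$ first, but this difference is purely cosmetic.

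There is, however, a genuine gap in your treatment of the derivative estimate. You propose to ``read $z'(t)$ off from~\eqref{eq:prop_eq} as a functional on $H^1(\Om)^s$'', but identity~\eqref{eq:prop_eq} is only assumed to hold for test functions $v\in Z^s$, not for all of $H^1(\Om)^s$. When $Z\subsetneq H^1(\Om)$---the case actually needed in the Galerkin step of Theorem~\ref{satz:ex_allgemein}, where $Z=\aufspann\{v_1,\dots,v_n\}$---this does not give you $\|z'(t)\|_{(H^1(\Om)^*)^s}$ directly. The paper closes this by taking an arbitrary $v\in H^1(\Om)^s$ of norm~$1$ and decomposing it as $v=v_1+v_2$ with $v_1\in Z^s$ and $v_2\in (Z^s)^\perp$; since in the $H^1(0,T;Z)^s$-regime $z'(t)$ is a genuine element of $Z^s$, one has $(z'(t),v_2)_{\Om^s}=0$, hence $\langle z'(t),v\rangle=(z'(t),v_1)_{\Om^s}$, and now~\eqref{eq:prop_eq} may be applied with the admissible test function $v_1$. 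The termwise bounds you list then go through because $\|v_1\|_{H^1(\Om)^s}\leq\|v\|_{H^1(\Om)^s}=1$ by Pythagoras. Without this orthogonal-projection step your bound on $\|z'(t)\|_{(H^1(\Om)^*)^s}$ in the proper-subspace case is not justified.
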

\begin{proof} Since $z(t)\in Z^s$ for almost all $t \in [0,T]$ Eq.~\eqref{eq:prop_eq} implies in particular
\begin{eqnarray}\label{eq:prop_eq_en_eingesetzt}
\langle z^{\prime}(t),z(t)\rangle+B(z,z;t) + \sum_{i=1}^{2} \langle F_{i}(z_1(t))-F_{i}(z_2(t)),z(t)\rangle = \langle f(t),z(t)\rangle
 \end{eqnarray}
for these $t$.
First we observe
\begin{equation*}
\langle F_{2}(z_1(t))-F_2(z_2(t)),z(t)\rangle=\langle F_{2}(z_1(t))-F_2(z_2(t)),z_1(t)-z_2(t)\rangle\geq 0
\end{equation*}
by the monotonicity condition assumed for $F_2$. Using additionally Theorem~\ref{satz:hauptsatz}(2), Eq.~\eqref{eq:prop_eq_en_eingesetzt} leads to the estimate
\begin{eqnarray*}
\frac{1}{2}\frac{d}{dt}\|z(t)\|_{\Om^s}^2+B(z,z;t)\leq\langle f(t),z(t)\rangle-\langle F_{1}(z_1(t))-F_{1}(z_2(t)),z(t)\rangle.
\end{eqnarray*}
Both of the summands on the right-hand side are estimated by means of their boundedness and afterwards by Cauchy's inequality with an arbitrary $\varepsilon>0$ \cite[Appendix B.2]{evans}. This provides for the first summand 
\begin{displaymath}
\langle f(t),z(t)\rangle\leq
\|f(t)\|_{(H^1(\Om)^*)^s}\|z(t)\|_{H^1(\Om)^s}\leq
\frac{1}{4\varepsilon}\|f(t)\|_{(H^1(\Om)^*)^s}^2+\varepsilon\|z(t)\|_{H^1(\Om)^s}^2.
\end{displaymath}
Employing additionally the Lipschitz condition we obtain for the second summand  
\begin{align*}
|\langle F_{1}(z_1(t))-&F_{1}(z_2(t)),z(t)\rangle |\leq
\|F_{1}(z_1(t))-F_{1}(z_2(t))\|_{(H^1(\Om)^*)^s}\|z(t)\|_{H^1(\Om)^s}\\
&\leq 
L_1\|z_1(t)-z_2(t)\|_{\Om^s} \|z(t)\|_{H^1(\Om)^s}
\leq 
\frac{L_1^2}{4\varepsilon}\|z(t)\|_{\Om^s}^2+\varepsilon \|z(t)\|_{H^1(\Om)^s}^2.
\end{align*}
Estimating the bilinear form $B$ according to Lemma~\ref{hi:bilevans}(2) we arrive at
\begin{align*}
 \frac{1}{2}\frac{d}{dt}\|z(t)\|_{\Om^s}^2
                                                        &\leq \frac{1}{4\varepsilon}\|f(t)\|_{(H^1(\Om)^*)^s}^2
                                                        +\varepsilon\|z(t)\|_{H^1(\Om)^s}^2
                                                        +\frac{L_1^2}{4\varepsilon}\|z(t)\|_{\Om^s}^2\\
                                                        &+\varepsilon \|z(t)\|_{H^1(\Om)^s}^2+\kappa_{\min}\|z(t)\|_{\Om^s}^2-\kappa_{\min}\|z(t)\|^2_{H^1(\Omega)^s}.                                         
\end{align*}
By rearranging the summands and naming $c_1=2\kappa_{\min}+{L_1^2}/{2\varepsilon}$ the inequality is transformed into
\begin{align}
\label{eq:mith1norm}
\frac{d}{dt}\|z(t)\|^2_{\Om^s}&
\leq 
\frac{1}{2\varepsilon}\|f(t)\|^2_{(H^1(\Om)^*)^s}+c_1\|z(t)\|_{\Om^s}^2-2(\kappa_{\min}-2\varepsilon)\|z(t)\|^2_{H^1(\Omega)^s}\\  
                                  &
                                  \leq 
                                  \frac{1}{2\varepsilon}\|f(t)\|^2_{(H^1(\Om)^*)^s}+c_1\|z(t)\|_{\Om^s}^2\nonumber.
\end{align}
The last estimate holds for $\varepsilon<\kappa_{\min}/2$. This condition implies $\kappa_{\min}-2\varepsilon>0$ and thus the negativity of the last summand.

The well-known lemma of Gronwall \cite[Appendix B.2]{evans} yields
\begin{align}\label{eq:zitatbewbanach}
\|z(t)\|_{\Om^s}^2& \leq  \text{e}^{tc_1}\left[\|z(0)\|_{\Om^s}^2 + \int_0^t  \frac{1}{2\varepsilon}\|f(\sigma)\|^2_{(H^1(\Om)^*)^s} d\sigma\right]\nonumber\\
                    &\leq
                    C_1\left[\|z(0)\|_{\Om^s}^2+\|f\|^2_{L^2(0,T;H^1(\Om)^*)^s}\right]
\end{align}
for all $t\in[0,T]$ with $C_1:= \exp(Tc_1)\max\{1,1/(2\varepsilon)\}$. By regarding the supremum with respect to $t$ we obtain the boundedness in the norm of $C([0,T];L^2(\Omega))^s$.

For an analogous result in $L^2(0,T;H^1(\Omega))^s$ we return to Eq. (\ref{eq:mith1norm}). Choosing again $\varepsilon<\kappa_{\min}/2$ we add the negative summand with a positive sign to the other side of the inequality. Integrating with respect to $t$ we arrive at
\begin{eqnarray*}
\int_0^T\frac{d}{dt}\|z(t)\|_{\Om^s}^2dt + c_2\|z\|^2_{L^2(0,T;H^1(\Omega))^s} \leq \int_0^Tc_1\|z(t)\|_{\Om^s}^2dt +\frac{1}{2\varepsilon}\|f\|^2_{L^2(0,T;H^1(\Om)^*)^s}                       
\end{eqnarray*}
using the abbreviation $c_2:=2(\kappa_{\min}-2\varepsilon)>0$. The first integral is transformed by virtue of Theorem~\ref{satz:hauptsatz}. Due to the boundedness in the norm of $C([0,T];L^2(\Omega))^s$ the integrand on the right side is bounded with respect to $t$. We obtain
\begin{align*}
 \|z(T)\|_{\Om^s}^2\! \! + c_2\|z\|^2_{L^2(0,T;H^1(\Omega))^s}\!  \leq Tc_1\|z\|^2_{C([0,T];L^2(\Omega))^s} \! +\! \frac{1}{2\varepsilon}\|f\|^2_{L^2(0,T;H^1(\Om)^*)^s}\! +\! \|z(0)\|_{\Om^s}^2. 
\end{align*}
Since the summand $\|z(T)\|_{\Om^s}^2$ is nonnegative the estimate in $L^2(0,T;H^1(\Omega))^s$ follows from inserting the upper bound for $\|z\|^2_{C(\left[0,T\right];L^2(\Omega))^s}$. In summary we obtain 
\begin{eqnarray}\label{eq:abshc_in_h1}
\|z\|^2_{L^2(0,T;H^1(\Omega))^s}
                    \leq
                    C_2\left[\|z(0)\|_{\Om^s}^2+\|f\|^2_{L^2(0,T;H^1(\Om)^*)^s}\right]
\end{eqnarray}
with $C_2:={(C_1Tc_1+\max\{1/(2\varepsilon),1\})}/{2(\kappa_{\min}-2\varepsilon)}$.
The actual assertion follows from extracting the square root and estimate the right side by virtue of the binomial theorem. The estimation constant is given by $C:=\sqrt{C_1}+\sqrt{C_2}$.

To prove the boundedness of $z^{\prime}$ we assume $z_2=0$,  i.e. $z=z_1$, and choose $v\in H^1(\Om)^s$ with $\|v\|_{H^1(\Om)^s}=1$. In case $Z\neq H^1(\Om)$ and $Z$ is a closed subset, there are $v_1\in Z^s$, $v_2\in (Z^s)^{\bot}$ with $v=v_1+v_2$. Since $H^1(\Om)^s$ is dense in $L^2(\Om)^s$ the orthogonality of $v_2$ and $z^{\prime}(t)\in Z^s$ in $H^1(\Om)^s$ implies their orthogonality in $L^2(\Om)^s$. Thus, we conclude from Eq.~\eqref{eq:prop_eq}
\begin{displaymath}
\langle z^{\prime}(t),v\rangle\!  = (z^{\prime}(t),v)_{\Om^s} \!= (z^{\prime}(t),v_1)_{\Om^s} \!=  \langle f(t),v_1\rangle - B(z,v_1;t)
 - \!\sum_{i=1}^2\langle F_i(z(t))-F_i(0),v_1\rangle
\end{displaymath}
for almost every $t\in[0,T]$. In case $Z=H^1(\Om)$ this equation corresponds to \eqref{eq:prop_eq} since here $v=v_1$.

As above, the Lipschitz continuous summand is estimated by
\begin{displaymath}
\langle F_1(z(t))-F_1(0),v_1\rangle\leq  L_1\|z(t)\|_{\Om^s}\|v_1\|_{H^1(\Om)^s}.
\end{displaymath}
Due to the homogeneity and the boundedness condition we obtain for the second summand  
\begin{displaymath}
\langle F_2(z(t))-F_2(0),v_1\rangle \leq \|F_2(z(t))\|_{(H^1(\Om)^*)^s}\|v_1\|_{H^1(\Om)^s}
\leq  L_2\|z(t)\|_{H^1(\Om)^s}\|v_1\|_{H^1(\Om)^s}.
\end{displaymath}
Similarly, we treat the dual pairing given by $f(t)$. Additionally, Lemma~\ref{hi:bilevans}(1) is applied to the bilinear form $B$. Remark, that the orthogonality of $v_1,v_2$ and the Pythagorean theorem in the Hilbert space $H^1(\Om)^s$ imply 
\begin{displaymath}
\|v_1\|_{H^1(\Om)^s}^2\leq\|v_1\|_{H^1(\Om)^s}^2+\|v_2\|^2_{H^1(\Om)^s}=\|v\|_{H^1(\Om)^s}^2=1.
\end{displaymath} 
Finally, the norm in $L^2(\Om)^s$ is bounded by the norm in $H^1(\Om)^s$. We conclude
\begin{align*}
 \|z^{\prime}(t)\|_{(H^1(\Om)^*)^s}^2&:=\sup_{\|v\|_{H^1(\Om)^s}=1}\langle z^{\prime}(t),v\rangle^2
  \leq (\|f(t)\|_{(H^1(\Om)^*)^s}  + C_3\|z(t)\|_{H^1(\Om)^s})^2\\
  &\leq 2(\|f(t)\|_{(H^1(\Om)^*)^s}^2  + C_3^2\|z(t)\|_{H^1(\Om)^s}^2)
 \end{align*}
with $C_3:=C_B + L_1+L_2$. The last estimate is valid because of the convexity of the square function on $\mathbb{R}$. Taking into account \eqref{eq:abshc_in_h1} we arrive at 
\begin{align*}
\|z^{\prime}\|_{L^2(0,T;H^1(\Om)^*)^s}^2\! &=\!\! \int_0^T\!\!\|z^{\prime}(t)\|_{(H^1(\Om)^*)^s}^2
 dt
  \leq 2(\|f\|_{L^2(0,T;H^1(\Om)^*)^s}^2\! +C_3^2
  \|z\|_{L^2(0,T;H^1(\Om))^s}^2)\\
  &\leq C_4(\|z(0)\|^2_{\Om^s}+\|f\|_{L^2(0,T;H^1(\Om)^*)^s}^2).
\end{align*}
denoting $C_4:=2(1+C_3^2C_2)$. Again the assertion follows from extracting the square root. Thereby, the estimation constant is determined as $\tilde{C}:=\sqrt{C_4}$.
\end{proof}

%
%
%
\begin{proof}[Proof of Thm. \ref{satz:ex_allgemein}]
This proof combines methods used by Tr\"oltzsch \cite{tr02} and Evans \cite{evans}.

We start choosing an orthogonal basis $(v_j)_{j\in\mathbb{N}}$ of the separable Hilbert space $H^1(\Omega)$. After a possible orthonor\-malization we can consider it to be an orthonormal basis of $L^2(\Omega)$ since $H^1(\Omega)$ is dense in this space. 

Let $l\leq s$. In the following, we will approximate the $l$-th component $y_l$ of the weak solution by a sequence $(y_{ln})_n$ whose $n$-th member belongs to the finite-dimensional subspace $\aufspann \{v_1, \dotsc,v_n\}$ of $H^1(\Omega)$. For this member we consider the ansatz
\begin{equation*}
y_{ln}(t)=\sum\limits_{i=1}^n{}^lu_i^n(t)v_i
\end{equation*}
at every point of time $t$ and, additionally, demand $y_{ln}(0)=y_0^l$. In the following we will determine the coefficients 
\begin{displaymath}
u^n:\left[0,T\right]\to\mathbb{R}^{n\times s}\quad \text{with}\quad u^n=
\left(
\begin{array}
{ccc}
{}^1u^n_1&\cdots& {}^su^n_1\\
\vdots&\ddots&\vdots\\
{}^1u^n_n&\cdots& {}^su^n_n
\end{array}
\right)
\end{displaymath}
such that $y_{ln}(t)$ solves
\begin{align}\label{eq:schwach_pw_basis}
(y_{ln}^{\prime}(t),v_j)_{\Om}+B_l(y_{ln},v_j;t) + \sum\limits_{m=1}^2\langle F_{ml}(y_{n}(t)),v_j\rangle_{H^1(\Om)^*} 
 = \langle f_l(t),v_j\rangle_{H^1(\Om)^*}
 \end{align}
for all $j\leq n$ and all $l\leq s$. 
Since $y_{ln}^{\prime}$ is a function the inner product $(y_{ln}^{\prime}(t),v_j)_{\Om}$ can be later perceived as a dual pairing in $H^1(\Om)^*$. 

Inserting the ansatz for $y_{ln}(t)$ into \eqref{eq:schwach_pw_basis} the linearity of the first summands and the orthonormality of the basis yield for the left side of the equation 
\begin{align*}
(\sum\limits_{i=1}^n{{}^lu_i^n}^{\prime}(t)v_i,v_j)_{\Om}+B(\sum\limits_{i=1}^n{}^lu_i^n(t)v_i,&v_j;t) 
+ \sum\limits_{m=1}^2\langle F_{ml}((\sum\limits_{i=1}^n{}^ku_i^n(t)v_i)_{k\leq s}),v_j\rangle_{H^1(\Om)^*}\\
&=
{{}^lu_j^n}^{\prime}(t)+\sum\limits_{i=1}^n{}^lu_i^n(t)B(v_i,v_j;t) + \Phi_{jl}(t,u^n(t))
 \end{align*}
 where we combined the last two terms to a function of the coefficient matrix, namely
 \begin{displaymath}
\Phi_{jl}(t,u^n(t)):= \sum\limits_{m=1}^2\langle F_{ml}((\sum_{i=1}^n{}^ku_i^n(t)v_i)_{k\leq s}),v_j\rangle_{H^1(\Om)^*}. 
\end{displaymath}
The same arguments yield $(y_{0}^l,v_j)_{\Om}=(y_{ln}(0),v_j)_{\Om}=(\sum_{i=1}^n{}^lu_i^n(0)v_i,v_j)_{\Om}={}^lu_j^n(0)$ for the initial value.

Combining these equations  for all $j\leq n$ and $l\leq s$, we observe that the coefficient matrix $u^n$ solves the $(n\times s)$-dimensional nonlinear system of ordinary differential equations
\begin{align}\label{ode}
 \frac{d}{dt}u^n(t)+A(t)u^n(t)+\Phi(t,u^n(t))&=r(t)\\
 u^n(0)&=((y_{0}^l,v_j)_{\Om}) _{j=1,\dotsc,n,\atop l=1,\dotsc,s }.\nonumber
\end{align}
Here, we define the matrices
\begin{displaymath}
\Phi:=
(\Phi_{jl})_{j=1,\dotsc,n,\atop l=1,\dotsc,s }:\left[0,T\right]\times\mathbb{R}^{n\times s}\to\mathbb{R}^{n\times s},\quad
 r:=
 (\langle f_l(\,.\,),v_j\rangle)_{j=1,\dotsc,n,\atop l=1,\dotsc,s } :\left[0,T\right]\to\mathbb{R}^{n\times s}
\end{displaymath}
and $A:=(B(v_i,v_j;.\,))_{j=1,\dotsc,n,\atop i=1,\dotsc,n }\in L^2(0,T)^{n\times n}$.  In each case the index above counts the number of lines. 

The solvability of \eqref{ode} follows from the existence theorem of Carath\' eodory \cite[Thm.~2.1.1]{cod55}. Due to the assumed continuity of $F_i$ the operator $\Phi(t,.\,)$, being a composition of $F_i$ with continuous functions, is continuous with respect to $u^n$. Furthermore, the orthonormality of $(v_j)_{j\in\mathbb{N}}$ in $L^2(\Om)$ yields
\begin{align*}
\|y_{n}(t)\|_{\Om^s}^2=\sum\limits_{l=1}^s\|\sum\limits_{i=1}^n{}^lu_i^n(t)v_i\|_{\Om}^2=
\sum\limits_{l=1}^s\sum\limits_{i=1}^n{}^lu_i^n(t)^2=
\|u^n(t)\|_{\mathbb{R}^{n\times s}}^2
\end{align*}
for the vector $y_n=(y_{1n},\dotsc,y_{sn})^{\top}$ whose elements are defined by the ansatz.
If the coefficient matrix $u^n$ solves problem \eqref{ode} the components of $y_n$ fulfill Eq.~\eqref{eq:schwach_pw_basis}. Thus, we can derive a priori estimates for $y_n$ and $u^n$ by means of Prop.~\ref{prop:energy_estimates} applied to the finite-dimensional and therefore closed subspace $Z:=\aufspann \{v_1,\dotsc,v_n\}$ of $H^1(\Omega)$. All elements of $Z$ are linear combinations of  $v_1, \dotsc,v_n$. Thus, the sum of the equations~\eqref{eq:schwach_pw_basis} for $j=1,\dotsc,n$ and $l=1,\dotsc,s$, each multiplied by an arbitrary constant,
corresponds to \eqref{eq:prop_eq} with $z_1=y_n$ and $z_2=0$ in $H^1(0,T;Z)^s$. Remark that the homogeneity conditions for $F_i$ allow to add $-F_i(0)$ to the corresponding summand.
 
Additionally, the definition via the ansatz leads to an estimate for $y_{n}(0)$. The associated proof uses the initial value of the coefficient matrix $u^n$, the orthonormality of the basis elements and Bessel's inequality. We conclude
\begin{align}\label{eq_anfangswert}
\|y_n(0)\|_{\Om^s}^2&=
\sum\limits_{l=1}^s\|\sum\limits_{i=1}^n{}^lu_i^n(0)v_i\|_{\Om}^2=\sum\limits_{l=1}^s\|\sum\limits_{i=1}^n(y_0^l,v_i)_{\Om}v_i\|_{\Om}^2=\sum\limits_{l=1}^s\sum\limits_{i=1}^n(y_0^l,v_i)_{\Om}^2\nonumber\\
&\leq \sum\limits_{l=1}^s\|y_0^l\|_{\Om}^2=\|y_0\|_{\Om^s}^2.
\end{align}
Combining \eqref{eq_anfangswert} with both statements of the proposition we obtain the boundedness result
\begin{align}\label{eq:totalebeschraenktheit}
\sup_{t\in[0,T]}\|y_n(t)\|_{\Om^s}+\|y_n\|_{W(0,T)^s}\leq C_5\left[\|y_0\|_{\Om^s}+\|f\|_{L^2(0,T;H^1(\Om)^*)^s}\right]
\end{align}
with a constant $C_5>0$ independent of the sequence $(y_n)_{n\in\mathbb{N}}$. In addition, we conclude that all possible solutions of \eqref{ode} are bounded in $C([0,T],\mathbb{R}^{n\times s})$ by a constant only depending on the data of the model. 

Since $F_1$ is Lipschitz continuous, $F_2$ is bounded, $f$ is integrable and Lemma~\ref{hi:bilevans}(1) holds for the bilinear form $B$ problem \eqref{ode}, defined on a bounded rectangular domain, has an absolutely continuous solution $u^n$ in a neighborhood of the initial value by Carath\' eodory's theorem. The a priori estimate allows to choose a domain of definition such that the solution $u^n\in H^1(0,T)^{n\times s}$ is globally defined on $[0,T]$.

For every $n\in\mathbb{N}$, let $y_n\in C([0,T];L^2(\Om))^s$ be defined by the ansatz with the coefficients $u^n=(u^n_1,\dotsc,u^n_n)^{\top}$ obtained by Carath\' eodory's theorem. Since $u^n$ is a weak solution of \eqref{ode} $y_n$ fulfills \eqref{eq:schwach_pw_basis} almost everywhere in $[0,T]$. 
However, the a priori estimate \eqref{eq:totalebeschraenktheit} remains valid for all members of the sequence $(y_n)_{n\in\mathbb{N}}$. In particular, $(y_n)_{n\in\mathbb{N}}$ proves to be bounded in $W(0,T)^s$ which is a Hilbert space and thus reflexive. Therefore, a subsequence $(y_{n_k})_{k\in\mathbb{N}}$ and a limit $y\in W(0,T)^s$ exist with $y_{n_k}\rightharpoonup y$ in $L^2(0,T;H^1(\Om))^s$ and $y_{n_k}^{\prime}\rightharpoonup y^{\prime}$ in $L^2(0,T;H^1(\Om)^*)^s$ for $k\to\infty$.

In the following, we will show that $y$ solves the weak formulation \eqref{eq:schwach_allg}.
Since, in particular, for every $j\in\{1,\dotsc,s\}$ the sequence of the $j$-th components $(y_{n_k}^j)_{k\in\mathbb{N}}$ converges weakly with respect to the norm of $L^2(0,T;H^1(\Omega))$ we conclude for an arbitrary $v\in L^2(0,T;H^1(\Om))^s$:
\begin{eqnarray*}
\begin{split}
&\int_0^T (\kappa_j(t)\nabla  y^j_{n_k}(t),\nabla v^j(t))_{L^2(\Omega)^n}dt \to \int_0^T(\kappa_j(t)\nabla  y^j(t),\nabla v^j(t))_{L^2(\Omega)^n}dt \quad \text {and}\\
&\int_0^T(\vec v(t)\cdot\nabla y^j_{n_k}(t),v^j(t))_{\Om}dt \to \int_0^T(\vec v(t)\cdot\nabla y^j(t),v^j(t))_{\Om}dt
\end{split}
\end{eqnarray*}
and thus $\int_0^T B(y_{n_k},v;t)dt\to\int_0^T B(y,v;t)dt$ if $k\to\infty$. The weak convergence $y_{n_k}^{\prime}\rightharpoonup y^{\prime}$ implies 
 \begin{displaymath}
\int_0^T\langle  y_{n_k}^{\prime}(t),v(t)\rangle dt 
\to
\int_0^T\langle y^{\prime}(t),v(t)\rangle dt
\quad\mbox{for all }v\in L^2(0,T;H^1(\Om))^s.
\end{displaymath}
Analogous results for the operators $F_1$ and $F_2$ depend on the properties of $Y$. 

Let us first consider $W(0,T)^s$ to be compactly embedded in $Y$. Then the bounded sequence $(y_{n_k})_k$ has a subsequence, denoted again by $(y_{n_k})_k$, converging strongly in $Y$. Since strong convergence implies weak convergence and the weak limit is unique we have $y_{n_k}\to y$ in $Y$. 
The continuity of $F:=F_1+F_2$  
yields
 \begin{eqnarray}\label{eq:konvergenznonlin}
\int_0^T\langle  F(y_{n_k}(t)),v(t)\rangle dt 
\to\!
\int_0^T\langle F(y(t)),v(t)\rangle dt
\quad\mbox{for all }v\in L^2(0,T;H^1(\Om))^s.
\end{eqnarray}
To extend the space of admissible test functions for the weak formulation~\eqref{eq:schwach_pw_basis} we choose $m\in \mathbb{N}$ and arbitrary smooth functions $d_{jl}:\left[0,T\right]\to\mathbb{R}$ for all $j=1,\dots,m$, $l=1,\dots,s$. We multiply Eq.~\eqref{eq:schwach_pw_basis} by the proper coefficient $d_{jl}(t)$, summarize over $j=1,\dots,m$ and $l=1,\dots,s$ and integrate with respect to $t$. Since inner products and dual pairings are linear we obtain
\begin{align*}
 \int_0^T\{\langle y^{\prime}_{n_k}(t),v(t)\rangle  + B(y_{n_k},v;t)
 +\langle F(y_{n_k}(t)),v(t)\rangle \} dt= \int_0^T\langle f(t),v(t)\rangle dt
 \end{align*}
with the special test function $v\in C^1([0,T]; H^1(\Om))^s$ defined by the components
\begin{eqnarray}\label{eq:spezielle testfunktion}
v_l=\sum_{j=1}^md_{jl}v_j\in C^1([0,T]; H^1(\Om))\quad\text{for all $l=1,\dotsc,s$.}
\end{eqnarray} 
According to the convergence results above we  obtain by passing to limits 
\begin{align*}
 \int_0^T\{\langle y^{\prime}(t),v(t)\rangle  + B(y,v;t)
 +\langle F(y(t)),v(t)\rangle \} dt= \int_0^T\langle f(t),v(t)\rangle dt
 \end{align*}
for all $v\in C^1([0,T]; H^1(\Om))^s$ of the form \eqref{eq:spezielle testfunktion}. Functions of this type lie dense in $L^2(0,T;H^1(\Om))^s$ since $(v_j)_{j\in\mathbb{N}}$ is a basis of $H^1(\Om)$ and $C^1$ lies dense in $L^2$ \cite[Thm. 3.14]{rud87}. Thus, the weak formulation holds for an arbitrary test function from $L^2(0,T;H^1(\Om))^s$ and, in the first case, the proof is complete.

Now consider the case that $W(0,T)^s\hookrightarrow Y$ is not compactly embedded. 

In the purely Lipschitz continuous case $F_2=0$ we additionally assume that $F_1:Y\to L^2(0,T;H^1(\Om)^*)^s
$ is weakly continuous. Since the weak convergence of $(y_{n_k})_k$ in $W(0,T)^s$ implies the same property in $Y$ we come by the weak convergence of $(F_1(y_{n_k}))_k$ in  $L^2(0,T;H^1(\Om)^*)^s$. Thus, we have a result analogous to \eqref{eq:konvergenznonlin} and the proof is complete.

Consider at last $F_2\neq0$, $L^2(0,T;H^1(\Om))^s\hookrightarrow Y$ and $F_1$ to be strongly continuous. The weak convergence of $(F_1(y_{n_k}))_k$ is deduced as in the last paragraph. Since it is not weakly continuous the same result for $F_2:Y\to L^2(0,T;H^1(\Om)^*)^s$ has to be derived differently.

We observe that the assumptions for $F_2$ and the boundedness of $(y_{n_k})_k$ imply the boundedness of $(F_2(y_{n_k}))_k$ in the Hilbert space $L^2(0,T;H^1(\Omega)^*)^s$. Thus, a subsequence, again denoted by $(F_2(y_{n_k}))_k$, and a limit $D\in L^2(0,T;H^1(\Omega)^*)^s$ exist with $F_2(y_{n_k})\rightharpoonup D$ in $L^2(0,T;H^1(\Omega)^*)^s$. 
 
Therefore, we obtain by passing to limits as in the last paragraph 
\begin{align}\label{eq:weakform_with_D}
\!\!\int_0^T\!\!\!\{\langle y^{\prime}(t),v(t)\rangle+ B(y,v;t)
+ \langle D(t),v(t)\rangle \} dt = \int_0^T\!\!\!\langle f(t)-F_1(y(t)),v(t)\rangle dt
 \end{align}
for all $v\in L^2(0,T;H^1(\Om))^s$.
Strictly speaking, this statement was proved again only for special test functions taken from the space $C^1([0,T]; H^1(\Om))^s$. However, we have already seen above that such functions lie dense in $ L^2(0,T;H^1(\Om))^s$.

%
%
%
Before the proof is finished it remains to show $F_2(y)=D$. Since $Y$ is a superset of $L^2(0,T;H^1(\Om))^s$ we are able to employ a lemma from the theory of monotone operators  proved by Gajewski et al. \cite{gaj67} and applied by Tr\"oltzsch \cite{tr02}. Because of the general space $Y$ and the non-monotone operator $F_1$ Tr\"oltzsch's considerations have to be extended. 

We will utilize the statements of the following lemma. As a corollary, we obtain the initial value condition $y(0)=y_0$. 
\begin{lemma}\label{lem:punktweise}
Let $(y_{n_k})_k$ be the sequence defined in the ongoing proof.
Then $(y_{n_k}(t))_k$ converges weakly to $y(t)$ in the space $L^2(\Om)^s$ for every $t\in \left[0,T\right]$. Moreover, the sequence $(y_{n_k}(0))_k$ converges strongly to the initial value $y_0\in L^2(\Om)^s$. In particular, the initial value condition $y(0)=y_0$ is satisfied.
\end{lemma}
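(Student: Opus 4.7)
The plan is to exploit the continuous embedding $W(0,T)^s\hookrightarrow C([0,T];L^2(\Om))^s$ provided by Theorem~\ref{satz:hauptsatz}(1). For every fixed $t\in[0,T]$, the point evaluation $\delta_t:W(0,T)^s\to L^2(\Om)^s$, $z\mapsto z(t)$, is linear and, by the embedding together with the continuity of the evaluation $C([0,T];L^2(\Om))^s\to L^2(\Om)^s$, bounded. Bounded linear operators map weakly convergent sequences to weakly convergent sequences, so the weak convergence $y_{n_k}\rightharpoonup y$ already established in $W(0,T)^s$ immediately yields $y_{n_k}(t)\rightharpoonup y(t)$ in $L^2(\Om)^s$ for every $t\in[0,T]$. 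This is the first assertion.

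For the strong convergence at $t=0$, I would go back to the construction of the Galerkin approximation. By the ansatz and the definition of the initial datum for the system \eqref{ode}, the $l$-th component satisfies
\begin{equation*}
y_{ln}(0)=\sum_{i=1}^n {}^l u_i^n(0)v_i=\sum_{i=1}^n (y_0^l,v_i)_{\Om}\,v_i,
\end{equation*}
which is exactly the $L^2(\Om)$-orthogonal projection of $y_0^l$ onto $\aufspann\{v_1,\dots,v_n\}$. Since $(v_j)_{j\in\mathbb{N}}$ was chosen as an orthonormal basis of $L^2(\Om)$, these partial sums converge to $y_0^l$ in $L^2(\Om)$ as $n\to\infty$. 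Taking the sum over $l=1,\dots,s$ gives $y_{n_k}(0)\to y_0$ strongly in $L^2(\Om)^s$.

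To obtain $y(0)=y_0$, I combine the two preceding steps. Applying the first assertion at $t=0$ yields $y_{n_k}(0)\rightharpoonup y(0)$ in $L^2(\Om)^s$, while the second step yields $y_{n_k}(0)\to y_0$ strongly and hence weakly in the same space. Uniqueness of weak limits then gives $y(0)=y_0$. I do not anticipate a genuine obstacle: the essential ingredients (the embedding into $C([0,T];L^2(\Om))^s$, weak-to-weak continuity of bounded linear maps, and the completeness of the orthonormal basis) have already been provided earlier in the paper; the only point that requires a moment of care is to argue that weak convergence in $W(0,T)^s$ really is weak convergence in the product space, which follows from the componentwise weak convergences of $y_{n_k}$ in $L^2(0,T;H^1(\Om))^s$ and of $y_{n_k}^{\prime}$ in $L^2(0,T;H^1(\Om)^*)^s$ that were obtained before the lemma.
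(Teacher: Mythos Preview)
Your proof is correct and follows essentially the same route as the paper: the paper also uses that the evaluation map at $t$ is linear and bounded from $C([0,T];L^2(\Om))^s$ (hence from $W(0,T)^s$ via the embedding) to $L^2(\Om)^s$, and for $t=0$ it computes the Fourier tail $\sum_{i=n_k+1}^\infty(y_0^l,v_i)_\Om^2$ explicitly, which is exactly your orthogonal-projection argument. The final identification $y(0)=y_0$ by uniqueness of weak limits is identical.
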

\begin{proof}
An easy argument provides that every continuous, linear operator is weakly sequentially continuous, i.e. the image of a weakly convergent sequence is again weakly convergent. 
 
 For every $t\in\left[0,T\right]$ the operator $E_t:C([0,T];L^2(\Om))^s\to L^2(\Om)^s, y\mapsto y(t)$ is obviously linear and bounded due to $\|E_ty\|_{\Om^s}=\|y(t)\|_{\Om^s}\leq\sup_{t\in\left[0,T\right]}\|y(t)\|_{\Om^s}=\|y\|_{C([0,T];L^2(\Om))^s}$. Therefore, it is continuous and thus weakly sequentially continuous. 
  Furthermore, the ongoing proof provides $y_{n_k}\rightharpoonup y$ in the space $C([0,T];L^2(\Om))^s$ because of the embedding $W(0,T)\hookrightarrow C([0,T];L^2(\Om))$. Thus, the weak sequential continuity of $E_t$ implies the first statement of the lemma.

To prove the second assertion we consider the ansatz for $y_{ln_k}(0)$ and the Fourier representation $y_0^l=\sum_{i=1}^\infty (y_0^l,v_i)_{\Om}v_i$ of $y_0^l$ in $L^2(\Om)$ for every $l=1,\dotsc,s$. Estimating their difference we use the properties of inner products and orthonormal bases as in \eqref{eq_anfangswert}. The convergence in the last step results from the quadratic summability of the Fourier coefficients. We obtain
\begin{align*}
\|y_{n_k}(0)-y_0\|^2_{\Om^s}&=\sum_{l=1}^{s}
\|\sum_{i=1}^{n_k}{}^lu_i^{n_k}(0)v_i-\sum_{i=1}^\infty(y_0^l,v_i)_{\Om}v_i\|^2_{\Om}
=\sum_{l=1}^{s}\|\!\!\!\sum_{i=n_k+1}^\infty(y_0^l,v_i)_{\Om}v_i\|^2_{\Om}\\
&=\sum_{l=1}^{s}\sum_{i=n_k+1}^\infty(y_0^l,v_i)_{\Om}^2\to 0\quad\text{for $k\to\infty$.}
\end{align*}
Finally, $y_{n_k}(0)\to y_0$ in $L^2(\Om)^s$ implies the weak convergence $y_{n_k}(0)\rightharpoonup y_0$. On the other hand, the first part of the lemma indicates $y_{n_k}(0)\rightharpoonup y(0)$. The uniqueness of the weak limit yields $y(0)=y_0$.
 \end{proof}
%
%
Now we are able to prove the identity $D=F_2(y)$ in the space $L^2(0,T,H^1(\Om)^*)^s$. As announced above, we use the following lemma \cite[Lemma III.1.3]{gaj67}. 
\begin{lemma}
 \label{lem:gajewski} Consider a reflexive Banach space $H$. Let the operator $A:H\to H^*$ be monotone and demi-continuous\footnote{An operator is called demi-continuous if the image of a strongly convergent sequence is weakly convergent. Obviously, continuity implies demi-continuity.}.  If there are $y,y_n\in H$ for all $n\in\mathbb{N}$ and $w\in H^*$ with the properties $y_n\rightharpoonup y$ as well as 
 $$(i)\quad A(y_n)\rightharpoonup w\quad\text{in $H^*$\quad and\quad}(ii)\quad\limsup\limits_{n\to\infty}\langle A(y_n),y_n\rangle_{H^*}\leq\langle w,y\rangle_{H^*}$$
 then $A(y)= w$ in $H^*$.
\end{lemma}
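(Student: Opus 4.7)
The plan is to apply the classical Minty trick. The starting point is the monotonicity inequality
\begin{equation*}
\langle A(y_n)-A(v),y_n-v\rangle_{H^*}\geq 0\qquad\text{for all }v\in H,\ n\in\mathbb{N},
\end{equation*}
which I would expand into the four terms
\begin{equation*}
\langle A(y_n),y_n\rangle_{H^*}-\langle A(y_n),v\rangle_{H^*}-\langle A(v),y_n\rangle_{H^*}+\langle A(v),v\rangle_{H^*}\geq 0.
\end{equation*}
The idea is to pass to $\limsup_{n\to\infty}$ term by term, using the three hypotheses separately: assumption (ii) controls the critical product $\langle A(y_n),y_n\rangle_{H^*}$ from above by $\langle w,y\rangle_{H^*}$, while (i) together with $v\in H$ being fixed yields $\langle A(y_n),v\rangle_{H^*}\to\langle w,v\rangle_{H^*}$, and the weak convergence $y_n\rightharpoonup y$ together with $A(v)\in H^*$ gives $\langle A(v),y_n\rangle_{H^*}\to\langle A(v),y\rangle_{H^*}$. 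Combining these, I obtain the Minty-type inequality
\begin{equation*}
\langle w-A(v),y-v\rangle_{H^*}\geq 0\qquad\text{for every }v\in H.
\end{equation*}

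In the second step, I would exploit this inequality by substituting specific test elements. Pick an arbitrary $z\in H$ and $t>0$ and set $v:=y-tz$. Then the previous inequality becomes $t\langle w-A(y-tz),z\rangle_{H^*}\geq 0$, hence
\begin{equation*}
\langle w-A(y-tz),z\rangle_{H^*}\geq 0.
\end{equation*}
Now I would let $t\to 0^+$: since $y-tz\to y$ strongly in $H$, demi-continuity of $A$ yields $A(y-tz)\rightharpoonup A(y)$ in $H^*$, so the dual pairing with the fixed element $z$ passes to the limit and gives $\langle w-A(y),z\rangle_{H^*}\geq 0$. Replacing $z$ by $-z$ (both choices are allowed since $z\in H$ was arbitrary) yields the reverse inequality, hence $\langle w-A(y),z\rangle_{H^*}=0$ for every $z\in H$. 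This forces $w=A(y)$ in $H^*$.

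The main obstacle I expect is not the algebra but making sure every limit passage is justified: specifically, the split of $\limsup$ into limits of the individual summands is valid because three of the four terms actually converge (they are linear in either $y_n$ or $A(y_n)$, paired with a fixed element), so only the diagonal term $\langle A(y_n),y_n\rangle_{H^*}$ needs the $\limsup$ control provided by (ii). The reflexivity of $H$ does not enter the argument explicitly here, but it is tacitly used in the broader context to guarantee that weak limits exist in the applications of the lemma. Once the Minty inequality is reached, the demi-continuity hypothesis is exactly what is needed to pass from the perturbed points $y-tz$ back to $y$, closing the argument.
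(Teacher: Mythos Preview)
The paper does not supply its own proof of this lemma; it is quoted verbatim as Lemma~III.1.3 of Gajewski et al.\ and simply cited. Your argument is the standard Minty trick and is correct in every step: the expansion of the monotonicity inequality, the passage to the $\limsup$ (justified because three of the four summands converge), the substitution $v=y-tz$, and the use of demi-continuity to send $t\to 0^+$ are all handled properly. Your remark that reflexivity is not actually used in the proof itself is also accurate; it is a hypothesis carried along from the surrounding existence theory rather than one needed for the lemma.
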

To be conform with the notation of Lemma~\ref{lem:gajewski} we define $H:=L^2(0,T;H^1(\Om))^s$ and restrict the reaction terms to $H$ which is possible because of the assumption $H\hookrightarrow Y$. In addition, we shorten the weak formulation \eqref{eq:weakform_with_D} by
\begin{displaymath}
y^{\prime} + w =R(y),
\end{displaymath}
where the functional $w\in H^*$ and the operator $R:H\to H^*$ are defined by
 \begin{align*}
& \langle w,v\rangle_{H^*}:= \int_0^T\{ B(y,v;t)+\langle D(t),v(t)\rangle \} dt\quad\text{and}\quad\\
& \langle R(\tilde{y}),v\rangle_{H^*}:=\int_0^T\{\langle f(t),v(t)\rangle-\langle F_{1}(\tilde{y}(t)),v(t)\rangle \}dt
\end{align*}
for all $v,\tilde{y}\in H$. Moreover, we define the operator $A:H\to H^*$ by 
\begin{equation*}
 \langle A(\tilde{y}),v\rangle_{H^*}:= \int_0^T\{ B(\tilde{y},v;t)+\langle F_2(\tilde{y}(t)),v(t)\rangle\} dt\quad\text{for all $\tilde{y},v\in H$.}
\end{equation*}

To be able to apply the lemma to $A$ and $w$ we check the assumptions. First, $F_2$ is assumed to be monotone and continuous. The correspondent properties for $B$ are established in Lemma \ref{hi:bilevans}: the monotonicity is stated in \ref{hi:bilevans}(3) and the continuity is equivalent to the boundedness in \ref{hi:bilevans}(1) since $B$ is bilinear. As a consequence, the sum $A$ is also monotone and continuous. Finally, continuity implies demi-continuity.

We have already proved $A(y_{n_k})\rightharpoonup w$ in $H^*$. In order to verify property (ii) of the lemma we deduce from the weak formulation \eqref{eq:schwach_pw_basis}
\begin{equation*}
\int_0^T\langle y_{n_k}^{\prime}(t),y_{n_k}(t)\rangle dt + \langle A(y_{n_k}),y_{n_k}\rangle_{H^*} = \langle R(y_{n_k}),y_{n_k}\rangle_{H^*}
\end{equation*}
 using the definitions of the current proof. 
Applying Theorem \ref{satz:hauptsatz} to the integral on the left side we obtain rearranging the summands 
\begin{eqnarray}
\label{eq:zwischenschrittabs2}
\langle A(y_{n_k}),y_{n_k}\rangle_{H^*} = \langle R(y_{n_k}),y_{n_k}\rangle_{H^*} + \frac{1}{2}\|y_{n_k}(0)\|^2_{\Om^s}-\frac{1}{2}\|y_{n_k}(T)\|^2_{\Om^s}.
 \end{eqnarray}
Lemma \ref{lem:punktweise}, applied to $t=T$, guarantees the weak convergence of $(y_{n_k}(T))_k$ to $y(T)$ which implies $\|y(T)\|_{\Om^s}\leq\liminf_{n\to\infty}\|y_{n_k}(T)\|_{\Om^s}$. Since the upper limit of a real sequence is always greater or equal to the lower limit we deduce
\begin{displaymath}
-\limsup\limits_{k\to\infty}\|y_{n_k}(T)\|^2_{\Om^s}\leq -\liminf\limits_{k\to\infty}\|y_{n_k}(T)\|_{\Om^s}^2\leq -\|y(T)\|_{\Om^s}^2.
\end{displaymath}
The same lemma indicates also $\lim_{k\to\infty}\|y_{n_k}(0)\|^2_{\Om^s}=\|y(0)\|^2_{\Om^s}$.

Now we investigate the convergence of $\langle R(y_{n_k}),y_{n_k}\rangle_{H^*}$. Since $f$ belongs to $H^*$ the weak convergence of $(y_{n_k})_k$  in $H$ provides $\langle f,y_{n_k}\rangle_{H^*}\to\langle f,y\rangle_{H^*}$ for $k\to\infty$.

On the other hand, we conclude by the strong continuity of $F_1$ 
\begin{align*}
|\langle F_{1}(y_{n_k}),y_{n_k}\rangle _{H^*}-\langle F_{1}(y),y\rangle _{H^*}|&\leq
|\langle F_{1}(y_{n_k})-F_1(y),y_{n_k}\rangle _{H^*}|+|\langle F_{1}(y),y_{n_k}-y\rangle _{H^*}| \\
&\leq
\| F_{1}(y_{n_k})-F_1(y)\|_{H^*}\|y_{n_k}\| _{H}+|\langle F_{1}(y),y_{n_k}-y\rangle _{H^*}|.
\end{align*}
The first summand converges to zero because $F_{1}(y_{n_k})\to F_1(y)$ in $H^*$ while the weak convergence of $(y_{n_k})_k$ induces its boundedness in $H$. For the same reason also the second summand converges to zero since $F_{1}(y)\in H^*.$ Altogether, the convergence $\langle R(y_{n_k}),y_{n_k}\rangle_{H^*}\to \langle R(y),y\rangle_{H^*}$ holds.

By these results we obtain for the upper limit of Eq.~(\ref{eq:zwischenschrittabs2}):
\begin{equation*}
\begin{split}
 \limsup\limits_{n\to\infty}\langle A(y_{n_k}),&y_{n_k}\rangle_{H^*}\!=\! \lim\limits_{n\to\infty}\!( \langle R(y_{n_k}),y_{n_k}\rangle_{H^*}\!+\! \frac{1}{2}
 \|y_{n_k}\!(0)\|^2_{\Om^s})\!-\!\frac{1}{2}\!\limsup\limits_{n\to\infty}\!\|y_{n_k}\!(T)\|_{\Om^s}^2\\
&\leq \langle R(y),y\rangle_{H^*}+ \frac{1}{2}\|y(0)\|^2_{\Om^s}-\frac{1}{2}\|y(T)\|^2_{\Om^s}\\
& = \langle R(y),y\rangle_{H^*}\!-\! \int_0^T\!\!\langle y^{\prime}(t),y(t)\rangle dt
= \langle R(y),y\rangle_{H^*}-\langle y^{\prime},y\rangle_{H^*} = \langle w,y\rangle_{H^*}.
 \end{split}
\end{equation*} 
In the second line, Theorem \ref{satz:hauptsatz} is applied again. The obtained integral is perceived as an element of $H^*$. The last equality sign is valid because $y\in H$ both fulfills the weak formulation and defines a proper test function.

Thus, Lemma~\ref{lem:gajewski} yields $A(y)=w$, i.e.
\begin{displaymath}
 \int_0^T\{B(y,v;t)+\langle D(t),v(t)\rangle\}dt= \int_0^T\{B(y,v;t)+\langle F_2(y(t)),v(t)\rangle\}dt
\end{displaymath}
for all $v\in H$. By subtracting the bilinear summand on both sides we obtain $D=F_2(y)$ in $H^*$.\\

Having shown the existence of a solution the proof of uniqueness remains. Let therefore $y_1,y_2\in W(0,T)^s$ be two weak solutions of the initial value problem \eqref{eq:schwach_allg}. It has to be shown that the difference $y:=y_1-y_2$ equals zero. Since both $y_1$ and $y_2$ have the same initial value we conclude $y(0)=y_1(0)-y_2(0)=y_0-y_0=0$. Inserting an arbitrary test function $v\in H^1(\Om)^s$ into the weak formulations for $y_1(t)$ and $y_2(t)$ and subtracting these equations we deduce
\begin{equation*}
 \langle y^{\prime}(t),v\rangle+B(y,v;t) + \langle F_{1}(y_{1}(t))-F_{1}(y_{2}(t)),v\rangle +\langle F_{2}(y_{1}(t))-F_{2}(y_{2}(t)),v\rangle =0
 \end{equation*}
for almost all $t\in\left[0,T\right]$. Thus, we are in the situation of Proposition~\ref{prop:energy_estimates} with $Z=H^1(\Om)$, $f=0$, $z_i=y_i\in W(0,T)^s$. We obtain in particular
\begin{eqnarray*}
\|y_1-y_2\|_{L^2(0,T;H^1(\Om))^s}\leq C(\|y_1(0)-y_2(0)\|_{\Om^s}+\|0\|_{L^2(0,T;H^1(\Om)^*)^s})=0.
\end{eqnarray*}
Since the norm is positive definite the result $y_1-y_2=0$ follows immediately.
Thus, the proof is complete.
\end{proof}
\section{An existence and uniqueness result with Banach's Fixed Point Theorem}\label{sec:banach}
This section contains a second existence and uniqueness result. In the proof, the solution is identified with a fixed point of a certain map and determined by means of Banach's Fixed Point Theorem: 
\begin{theorem}\label{sa:banachscherfixpunkt}\emph{\textbf{(Banach)}} Let $X$ be a Banach space and the map $A:X\to X$ is Lipschitz continuous with a constant $L\in (0,1)$. Hence a unique fixed point of $A$ exists in $X$, i.e. there is $x^{*}\in X$ with the property $A(x^{*})=x^{*}$.
\end{theorem}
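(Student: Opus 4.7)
The plan is to use the standard Picard iteration argument, exploiting the contraction property together with the completeness of $X$. Concretely, I would fix an arbitrary $x_0\in X$ and define a sequence recursively by $x_{n+1}:=A(x_n)$ for $n\in\mathbb{N}$. The goal is to show that $(x_n)_n$ is Cauchy in $X$, to identify its limit as the fixed point, and finally to prove uniqueness.

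The first step is a routine estimate by induction: from the Lipschitz property with constant $L$, one obtains
\begin{equation*}
\|x_{n+1}-x_n\|_X = \|A(x_n)-A(x_{n-1})\|_X \leq L\|x_n-x_{n-1}\|_X \leq \ldots \leq L^n\|x_1-x_0\|_X.
\end{equation*}
Combining this with the triangle inequality and the geometric series (using $L<1$) gives for arbitrary $m>n$
\begin{equation*}
\|x_m-x_n\|_X \leq \sum_{k=n}^{m-1}\|x_{k+1}-x_k\|_X \leq \|x_1-x_0\|_X\sum_{k=n}^{m-1}L^k \leq \frac{L^n}{1-L}\|x_1-x_0\|_X.
\end{equation*}
The right-hand side tends to zero as $n\to\infty$ independently of $m$, so $(x_n)_n$ is Cauchy.

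Since $X$ is a Banach space, the sequence converges to some $x^{*}\in X$. The Lipschitz condition implies that $A$ is continuous, so passing to the limit in the recursion $x_{n+1}=A(x_n)$ yields $x^{*}=A(x^{*})$, which provides the existence of a fixed point. For uniqueness, suppose $y^{*}\in X$ were another fixed point. Then
\begin{equation*}
\|x^{*}-y^{*}\|_X=\|A(x^{*})-A(y^{*})\|_X\leq L\|x^{*}-y^{*}\|_X,
\end{equation*}
which, combined with $L<1$, forces $\|x^{*}-y^{*}\|_X=0$ and hence $x^{*}=y^{*}$.

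There is really no serious obstacle here: this is the classical Banach Fixed Point Theorem, and the proof boils down to the geometric series estimate. The only subtle point worth stating explicitly is that Lipschitz continuity with constant $L<1$ (a \emph{strict contraction}) is what makes both the Cauchy estimate summable and the uniqueness argument conclusive; weakening to $L=1$ (non-expansiveness) would break both parts.
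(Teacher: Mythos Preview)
Your proof is correct and is exactly the standard constructive Picard iteration argument; the paper does not give its own proof but merely refers to Zeidler and notes that the fixed point is obtained as the limit of a specific sequence, which is precisely what you do.
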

The proof of Banach's Fixed Point Theorem, carried out e.g. by Zeidler \cite{zei85}, is constructive: the fixed point is identified with the limit of a specific sequence. Thus, the proof of the following existence theorem, based on Thm.~\ref{sa:banachscherfixpunkt}, provides an algorithm that helps compute the weak solution numerically.
%

\begin{theorem}\label{satz:ex_speziell}
Let $Y\hookrightarrow L^2(0,T;L^2(\Om))^s$ be a Banach space with $C([0,T];L^2(\Om))^s
\hookrightarrow Y$. As in Thm.~\ref{satz:ex_allgemein}, consider $F_1$ to be Lipschitz continuous, $F_2$ to be monotone and bounded and the condition $F_2(0)=0$ to be fulfilled.
In case $F_2\neq 0$, either the embedding $W(0,T)^s\hookrightarrow Y$ is compact or $L^2(0,T;H^1(\Om))^s\hookrightarrow Y$. Then, there is a unique solution $y\in W(0,T)^s$ of the weak initial value problem in Eq.~\eqref{eq:schwach_allg}. Furthermore, the estimate 
\begin{displaymath}
 \|y\|_{W(0,T)^s}\leq C(
 \|f\|_{L^2(0,T;H^1(\Om)^*)^s}
 +\|y_0\|_{\Omega^s})
\end{displaymath}
holds with a constant $C>0$ independent of $y$ and $y_0$.
\end{theorem}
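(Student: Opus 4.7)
The plan is to pose the problem as a fixed point equation on the Banach space $X := C([0,T]; L^2(\Om))^s$, using the embedding $W(0,T)^s \hookrightarrow X$ from Theorem~\ref{satz:hauptsatz}(1) and the hypothesis $X \hookrightarrow Y$. The strategy exploits the split of the nonlinearity into a Lipschitz part $F_1$ (which will be frozen) and a monotone part $F_2$ (handled by Theorem~\ref{satz:ex_allgemein}), and recovers the solution as a fixed point of the assignment $w \mapsto y$.

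Define $A : X \to X$ as follows. For $w \in X \hookrightarrow Y$, the datum $f - F_1(w)$ belongs to $L^2(0,T; H^1(\Om)^*)^s$ by the Lipschitz assumption on $F_1$. Consider the reduced initial value problem
\begin{align*}
y' + \int_0^T B(y,\cdot;t)\,dt + F_2(y) &= f - F_1(w), & y(0) &= y_0,
\end{align*}
and apply Theorem~\ref{satz:ex_allgemein} with only the monotone operator $F_2$ present and the trivial zero operator playing the role of the Lipschitz part (whose strong/weak continuity requirement is then vacuous). Either of the two embedding alternatives assumed in the present theorem matches one of the cases listed in Theorem~\ref{satz:ex_allgemein}, so a unique $y \in W(0,T)^s \hookrightarrow X$ is obtained. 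Set $A(w) := y$. Any fixed point $y^\ast = A(y^\ast)$ automatically lies in $W(0,T)^s$ and solves the full equation \eqref{eq:schwach_allg}.

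To prove $A$ has a unique fixed point, take $w_1, w_2 \in X$ and let $z := A(w_1) - A(w_2)$. Then $z(0) = 0$ and
$$\langle z'(t),v\rangle + B(z,v;t) + \langle F_2(A(w_1)(t)) - F_2(A(w_2)(t)), v\rangle = -\langle F_1(w_1(t)) - F_1(w_2(t)),v\rangle$$
for all $v \in H^1(\Om)^s$, which places us in the setting of Proposition~\ref{prop:energy_estimates} with $Z = H^1(\Om)$ and inhomogeneity $\tilde f := -(F_1(w_1) - F_1(w_2))$. The $L^\infty_t$-bound stated in that proposition gives a contraction only for small $T$; to cover $[0,T]$ in one shot, I would revisit the Gronwall step in its proof to extract the sharper pointwise estimate
$$\|z(t)\|_{\Om^s}^2 \leq K \int_0^t \|w_1(\sigma) - w_2(\sigma)\|_{\Om^s}^2\,d\sigma$$
with $K$ independent of $w_1, w_2$ (using the Lipschitz bound on $F_1$ to absorb $\tilde f$). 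Iterating yields $\|A^n w_1 - A^n w_2\|_X^2 \leq \frac{(KT)^n}{n!}\,\|w_1 - w_2\|_X^2$, so $A^n$ is a strict contraction once $n$ is large enough. The standard corollary of Banach's Fixed Point Theorem (Theorem~\ref{sa:banachscherfixpunkt}) then produces a unique fixed point $y^\ast \in X$ of $A$, which is the unique $W(0,T)^s$-solution since every such solution is a fixed point of $A$.

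The closing norm estimate follows by one more application of Proposition~\ref{prop:energy_estimates}, now with $z_1 := y^\ast$, $z_2 := 0$ and inhomogeneity $f - F_1(0)$ (where $F_1(0) \in L^2(0,T; H^1(\Om)^*)^s$ by Lipschitz continuity and is simply absorbed into the right-hand side). The first part of that proposition controls $\|y^\ast\|_{C([0,T]; L^2(\Om))^s}$ and $\|y^\ast\|_{L^2(0,T; H^1(\Om))^s}$, while its second part supplies the bound on $\|(y^\ast)'\|_{L^2(0,T; H^1(\Om)^*)^s}$, together yielding the $W(0,T)^s$-estimate with the asserted form. The principal obstacle throughout is contractivity on the full interval $[0,T]$; the sharpened Gronwall/iteration argument above (equivalently, a Bielecki-weighted norm $\sup_t e^{-\lambda t}\|\cdot\|_{\Om^s}$ with $\lambda$ large) avoids any need to subdivide $[0,T]$.
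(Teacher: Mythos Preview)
Your proposal is correct and follows essentially the same route as the paper: freeze $F_1$ at a given $w$, solve the resulting monotone problem via Theorem~\ref{satz:ex_allgemein} to define $A$, and obtain the fixed point on $C([0,T];L^2(\Om))^s$. The paper opts for the Bielecki-weighted norm $\sup_t e^{-Ct}\|\cdot\|_{\Om^s}^2$ (which you mention parenthetically) rather than the $A^n$-iteration, and then closes with the same appeal to Proposition~\ref{prop:energy_estimates} for the $W(0,T)^s$-bound.
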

%
%
%
%
%
%
%
\begin{proof}[Proof of Thm.~\ref{satz:ex_speziell}]
We will extend the method used by Evans \cite[Section 9.2]{evans} for one-dimensional, purely Lipschitz continuous problems and homogeneous boundary conditions. Banach's Fixed Point Theorem will be applied to the space $X:=C([0,T];L^2(\Om))^s$, endowed with the norm $\|y\|_{C}^2:=\sup_{t\in[0,T]}\|y(t)\|_{L^2(\Om)^s}^2\text{e}^{-Ct}$. The constant $C>0$ is a priori arbitrary and will be specified later on. As this modified norm is equivalent to the usual maximum norm $(X,\|.\|_C)$ defines a Banach space.

The following proof bases on the idea of approximating a solution of problem \eqref{eq:schwach_allg} by solutions of purely monotone problems. The Lipschitz continuous reaction term is eliminated by inserting a fixed $z\in X$. 
Since $F_1(z)\in L^2(0,T;H^1(\Om)^*)^s$ 
the monotone, inhomogeneous problem 
\begin{align}\label{eq:schwach_linearisiert}
 y^{\prime} +\int_0^T B(y,.\,;t)dt 
 +F_2(y) &= f-F_1(z)\\
y(0)&=y_0\nonumber
 \end{align}
 is well-defined
 and has a unique weak solution $y(z)\in W(0,T)^s$ because of Theorem~\ref{satz:ex_allgemein}. Due to Thm.~\ref{satz:hauptsatz}(1)
\begin{displaymath}
A:X\to X,\,\,z\mapsto y(z)
\end{displaymath}
maps $X$ into itself. Obviously, $y$ is a fixed point of $A$ if and only if it solves the original problem~\eqref{eq:schwach_allg}.

Thanks to Banach's Fixed Point Theorem it suffices to show the Lipschitz continuity of $A$ with a constant in the interval $(0,1)$. Choose therefore $z_1,z_2\in X$ and abbreviate $y_i:=A(z_i)$ for $i\in\{1,2\}$.

To establish an estimate for the difference $\delta:=y_1-y_2$ we consider the weak formulations for $y_i(t)$ for almost every $t\in[0,T]$. Being elements of $(H^1(\Om)^*)^s$, their summands  can be applied to any $v \in H^1(\Om)^s$. Subtracting the equations from each other we obtain due to the linearity of the first two summands on the left side 
\begin{align}\label{eq:absch_nichtlin_beide}
\langle\delta^{\prime}(t),v\rangle+B(\delta,v;t)+\langle F_2(y_1(t))-F_2(y_2(t)),v\rangle
&=\langle F_1(z_2(t))-F_1(z_1(t)),v\rangle.
 \end{align}
The inhomogeneity $f$ vanishes since it appears in both of the weak formulations.
This equation corresponds to \eqref{eq:prop_eq}. The assumptions
allow to apply the first part of Prop.~\ref{prop:energy_estimates} yielding, in particular, the analog to Eq.~\eqref{eq:zitatbewbanach}
\begin{equation*}
\begin{split} 
\|\delta(t)\|_{\Om^s}^2 \leq \text{e}^{tc_1}\int_0^t  \frac{1}{2\varepsilon}\|F_1(z_2(\sigma))-F_1(z_1(\sigma))\|^2_{(H^1(\Om)^*)^s} d\sigma
\end{split}
\end{equation*}
for all $t\in[0,T]$ with the constant $c_1=2\kappa_{\min}>0$. Since $A(z_1)(0)=y_0=A(z_2)(0)$ the initial value of $\delta$ is equal to zero.
Applying the assumed Lipschitz condition of $F_1$ we arrive at 
\begin{equation*}
\begin{split} 
\|\delta(t)\|_{\Omega^s}^2& \leq\text{e}^{tc_1}\int_{0}^t \! \! \Psi\|(z_1-z_2)(\sigma)\|_{\Omega^s}^2d\sigma
\end{split}
\end{equation*}
 with the constant $\Psi:=L_1^2/(2\varepsilon)$.
In the next step, we estimate the exponential function and extend the integrand with respect to the underlying maximum norm. We obtain
\begin{align*}
\|\delta(t)\|_{\Omega^s}^2 &\leq \text{e}^{Tc_1}\int_{0}^t \! \! \Psi\|(z_1-z_2)(\sigma) \|_{\Omega^s}^2\text{e}^{-C\sigma}\text{e}^{C\sigma}d\sigma
\leq \Psi\text{e}^{Tc_1}\|z_1-z_2\|_{C}^2\int_{0}^t \! \! \text{e}^{C\sigma}d\sigma\\
&\leq
\frac{\Psi\text{e}^{Tc_1}}{C}\|z_1-z_2\|_{C}^2 \text{e}^{Ct}.
\end{align*}
In the last step, the remaining integral was estimated by
\begin{eqnarray}
\int_0^t\mbox{e}^{C\sigma}d\sigma=\frac{1}{C}[\text{e}^{Ct}-1]\leq\frac{1}{C}\text{e}^{Ct}.\label{eq:integralberechnung}
\end{eqnarray}
After having multiplied both sides of the inequality for $\|\delta(t)\|_{\Omega^s}^2$ by $\text{e}^{-Ct}$ we find out for the supremum 
\begin{displaymath}
\|A(z_1)-A(z_2)\|_{C}^2=\sup_{t\in[0,T]}\|\delta(t)\|_{\Om^s}^2 \text{e}^{-Ct}\leq \frac{\Psi\text{e}^{Tc_1}}{C}\|z_1-z_2\|_{C}^2.
\end{displaymath}
Thus, $A$ proves to be Lipschitz continuous with the constant 
\begin{displaymath}
L_A:=\sqrt{\frac{1}{C}\frac{L_1^2}{2\varepsilon}\text{e}^{2T\kappa_{\min}}}.
\end{displaymath}
The proof is valid for any $C>0$.  Choosing $C>L_1^2(2\varepsilon)^{-1}\exp(2T\kappa_{\min})$ we obtain the property $L_A<1$ due to the strict monotonicity of the square root function on $\mathbb{R}_{>0}$. Hence the map $A$ is a contraction in the Banach space $X$, endowed with the modified maximum norm with the special $C$. Banach's theorem provides the existence of a unique fixed point $y\in X$ of $A$.
Since every element in $X$ is a fixed point if and only if it solves Eq.~\eqref{eq:schwach_allg} the proof of existence and uniqueness is complete.

The asserted estimate of the solution $y$ is a direct consequence of Proposition~\ref{prop:energy_estimates}. By inserting an arbitrary element $v\in H^1(\Om)^s$ as a test function into the weak formulation for $y(t)$ we obtain 
\begin{align*}
\langle y^{\prime}(t), v \rangle +B(y,v;t)+\langle F_1(y(t)),v\rangle+\langle F_2(y(t)),v\rangle 
= \langle f(t),v\rangle
\end{align*}
for almost every $t\in[0,T]$ which corresponds to Eq.~\eqref{eq:prop_eq} with $z_1=z=y$ and $z_2=0$. Prop.~\ref{prop:energy_estimates} yields a constant $C_W>0$ with
\begin{eqnarray*} \|y\|_{W(0,T)^s}
  \leq C_W(\|f\|_{L^2(0,T;H^1(\Om)^*)^s}+\|y(0)\|_{\Om^s}).
\end{eqnarray*}
Taking into account the initial value condition $y(0)=y_0$, the proof is complete.
\end{proof}
%
%
%
\section{Analysis of the $PO_4$-$DOP$-model}\label{sec:examples}
In this section we will apply the results about existence and uniqueness to the initial value problems associated with the $PO_4$-$DOP$-model. In Sec.~\ref{sec:modellgleichungenndop} we introduced them as the model equations themselves (Eq.~\eqref{eq:zustand}) and their derivative (Eq.~\eqref{eq:zustand_derivative}).

In the $PO_4$-$DOP$-model, the biological uptake of phosphate is expressed by means of saturation functions. Since this kind of function is very typical for marine ecosystem models we will investigate it on a more abstract level in the next subsection.  

\subsection{Saturation functions}\label{sub:saturation}
Reactions in marine ecosystems, e.g. the growth of a tracer or the transformation of one tracer into another, are often described by Michaelis-Menten kinetics. According to this theory, the reaction rate does not increase proportionally with the influencing factors (e.g. nutrients or light) but approaches a maximum rate at high concentrations of the influencing factors. This is typically expressed by a saturation function like
\begin{displaymath}
f_K:\mathbb{R}\to\mathbb{R}, \quad f_K(x):=\frac{x}{|x|+K}.
\end{displaymath}
The half saturation constant $K>0$ indicates the concentration at which the reaction rate is half of the maximum.

Variants of the function $f_K$ are found in many ecosystem models. Examples are the $PO_4$-$DOP$-model or the $NPZD$-model of Schartau and Oschlies, presented  by R\"uckelt et al. \cite{rue10}. In general, the modulus in the denominator does not appear in the actual model descriptions since, naturally, tracer concentrations are supposed to be positive. However, it cannot be omitted in a strict mathematical formulation since a priori it is not known whether the solution of a partial differential equation is nonnegative. 

In the next lemma, we state some essential properties of $f_K$.
\begin{lemma}\label{hi:hilfsfktreel}
 The real function $f_K$ is bounded by 1 and Lipschitz continuous.
\end{lemma}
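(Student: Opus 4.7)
The plan is to establish the two claims separately, both by elementary arguments.

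For boundedness, I would simply observe that
\begin{equation*}
|f_K(x)| = \frac{|x|}{|x|+K} \leq \frac{|x|+K}{|x|+K} = 1 \qquad \text{for all } x\in\mathbb{R},
\end{equation*}
which even gives the strict bound $|f_K(x)|<1$ since $K>0$.

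For Lipschitz continuity, my preferred route is via the derivative. On each of the half-lines $x>0$ and $x<0$, $f_K$ is a rational function and a direct computation yields
\begin{equation*}
f_K'(x) = \frac{K}{(|x|+K)^2},
\end{equation*}
and a one-sided limit check at $x=0$ shows this formula extends continuously across the origin, so $f_K$ is in fact $C^1(\mathbb{R})$. The derivative is nonnegative and bounded by $1/K$, since $(|x|+K)^2 \geq K^2$. The mean value theorem then yields
\begin{equation*}
|f_K(x)-f_K(y)| \leq \frac{1}{K}|x-y| \qquad \text{for all } x,y\in\mathbb{R},
\end{equation*}
which is the desired Lipschitz estimate with constant $L=1/K$.

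A direct algebraic alternative is to write
\begin{equation*}
f_K(x) - f_K(y) = \frac{x(|y|+K) - y(|x|+K)}{(|x|+K)(|y|+K)} = \frac{K(x-y) + (x|y|-y|x|)}{(|x|+K)(|y|+K)}
\end{equation*}
and to check by a brief sign-case distinction that $|x|y|-y|x||\leq |x-y|\cdot\min(|x|,|y|)$; after bounding the denominator from below by $K^2+K\max(|x|,|y|)$ this again gives the constant $1/K$. I expect no genuine obstacle here; the only mildly delicate point is treating the non-differentiability of $x\mapsto|x|$ at the origin, which is why I prefer the derivative approach combined with the observation that the two one-sided derivatives at $0$ coincide.
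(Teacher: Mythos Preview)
Your proof is correct and follows essentially the same approach as the paper: bounding $|f_K|$ by $1$ via the obvious inequality, then establishing differentiability (with a separate check of the one-sided derivatives at $x=0$), computing $f_K'(x)=K/(|x|+K)^2$, bounding it by $1/K$, and invoking the mean value theorem to obtain the Lipschitz constant $1/K$. Your algebraic alternative is a nice addition but not pursued in the paper.
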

\begin{proof}
If $x\neq0$ we have $|x|+K\geq|x|$ and therefore
 \begin{displaymath}
|f_K(x)|=|\frac{x}{|x|+K}|\leq\frac{|x|}{|x|}=1.
\end{displaymath}
Since the same statement obviously holds for $x=0$ the function $f_K$ is bounded by 1. The Lipschitz continuity is proved by virtue of the well-known mean value theorem. Therefore, we show that $f_K$ is differentiable. Due to the modulus $|x|$ in the denominator the differentiability in $x=0$ has to be regarded separately. However, since the limits exist we conclude:
\begin{equation*}
\begin{split}
&f_K^{\prime}(0+)=\lim_{t\downarrow0}\frac{f_K(0+t)-f_K(0)}{t}=\lim_{t\downarrow0}\frac{1}{t}\frac{t}{t+K}=\lim_{t\downarrow0}\frac{1}{t+K}=\frac{1}{K}=\frac{K}{(|0|+K)^2}\mbox{,}\\
&f_K^{\prime}(0-)=\lim_{t\uparrow0}\frac{f_K(0+t)-f_K(0)}{t}=\lim_{t\uparrow0}\frac{1}{t}\frac{t}{-t+K}=\lim_{t\uparrow0}\frac{1}{-t+K}=\frac{1}{K}=\frac{K}{(|0|+K)^2}.
\end{split}
 \end{equation*}
Both of the one-sided limits are equal and thus $f_K$ is differentiable in $x=0$. Everywhere else the differentiability follows from the fact that $f_K$ is a composition of differentiable functions. The derivative can be determined via the quotient rule:

\begin{align*}
&f_K^{\prime}(x)=\frac{d}{dx}\frac{x}{x+K}=\frac{x+K-x}{(x+K)^2}=\frac{K}{(x+K)^2}=\frac{K}{(|x|+K)^2}&&\mbox{ for $x>0$,}\\
&f_K^{\prime}(x)=\frac{d}{dx}\frac{x}{-x+K}=\frac{-x+K+x}{(-x+K)^2}=\frac{K}{(-x+K)^2}=\frac{K}{(|x|+K)^2}&&\mbox{ for }x<0.
 \end{align*}
 From $|x|+K\geq K$ we conclude $|f^{\prime}_K(x)|=K/(|x|+K)^2\leq1/K$. The mean value theorem yields
\begin{displaymath}
|f_K(x)-f_K(y)|\leq\max_{\xi\in\mathbb{R}}|f^{\prime}_K(\xi)||x-y|=\frac{1}{K}|x-y|\quad\mbox{for all } x,y\in\mathbb{R},
\end{displaymath}
the Lipschitz continuity of $f_K$ with the constant $1/K$. 
\end{proof}

%
\subsection{The $PO_{4}$-$DOP$-model equations}\label{sub:ndop}
In this section, we show the unique weak solvability of the $PO_4$-$DOP$-model equations \eqref{eq:zustand} by virtue of Theorem~\ref{satz:ex_speziell}. 

Both reaction terms $d$ and $b$  fulfill the assumptions of Sec.~\ref{sec:generalassumptions} concerning the generating functionals and their domain of definition $Y:=L^2(0,T;L^2(\Om))^2$ has the property $C([0,T];L^2(\Om))^2\hookrightarrow Y$. The operators $F_1:Y\to L^2(0,T;H^1(\Om)^*)^2,\,y\mapsto \tilde{d}(y)+\tilde{b}(y)$, defined according to Lem.~\ref{lem:db_erz_funkt}, and $F_2=0$ represent the reaction terms belonging to the weak formulation of the $PO_{4}$-$DOP$-model. Furthermore, the proof of Lem.~\ref{lem:db_erz_funkt} and the triangle inequality yield  
\begin{align*}
\|F_1(y(t))-F_1(z(t))\|_{(H^1(\Om)^*)^2}\!\leq\! \|d(y,.\,,t)-d(z,.\,,t)\|_{\Om^2}\!+\!c_{\tau}\|b(y,.\,,t)-b(z,.\,,t)\|_{\Ga^2}
\end{align*}
for all $y,z\in Y$. Thus, it suffices to prove the Lipschitz continuity of the functions $d$ and $b$.

As a preparation, we establish this property for the components $G,E$ and $\bar{F}$. To this end, choose $t\in[0,T]$ and $y,z\in L^2(0,T;L^2(\Om))^2$. Employing notation and results of Lemma~\ref{hi:hilfsfktreel} we obtain primarily
\begin{equation*}
\begin{split}
\|G(y_1,.\,,t)\!-G(z_1,.\,,t)\|^2_{\Om_1}&\!=\!\int_{\Om_1}\!\!\!\!\alpha^2
f_{K_I}^2(I(x^{\prime\!\!\!},t)\text{e}^{-x_3K_W})
|f_{K_P}(y_1(x,t))\!-f_{K_P}(z_1(x,t))|^2dx\\
&\!\leq\!\int_{\Om_1}\!\!\!\!\alpha^2\frac{1}{K^2_P}|y_1(x,t)-z_1(x,t)|^2dx=\frac{\alpha^2}{K^2_P}\|y_1(t)-z_1(t)\|^2_{\Om_1}.
\end{split}
\end{equation*} 
Considering $E$, we apply H\"older's inequality to the integral over $[0,h_e(x^{\prime})]$. Since $h_e(x^{\prime})\leq \bar{h}_e$ and the latter is independent of $x^{\prime}$ we arrive at an integral over $\Om_1$. In the last line we insert the result obtained for $G$. Thus, we obtain
\begin{equation*}
\begin{split}
\|E(y_1,.\,,t)&-E(z_1,.\,,t)\|^2_{\Om^{\prime}}
=\int_{\Om^{\prime}}
(1-\nu)^2(\int_0^{h_e(x^{\prime})}\{G(y_1,x,t)-G(z_1,x,t)\}dx_3)^2
dx^{\prime}\\
&\leq
(1-\nu)^2\int_{\Om^{\prime}}
h_e(x^{\prime})\int_0^{h_e(x^{\prime})}\{G(y_1,x,t)-G(z_1,x,t)\}^2dx_3
dx^{\prime}\\
&\leq(1-\nu)^2\bar{h}_e\|G(y_1,.\,,t)-G(z_1,.\,,t)\!\|^2_{\Om_1}\!\leq\!
\frac{\alpha^2(1-\nu)^2\bar{h}_e}{K^2_P}\|y_1(t)-z_1(t)\|^2_{\Om}.
\end{split}
\end{equation*}
This computation shows clearly how the norm in the two-dimensional space $\Om^{\prime}$ is transformed into a norm in the three-dimensional space $\Om_1$ by the non-locality of $E$. Without this property, the result for $G$ would not have been applicable. 

In order to show the analogous condition for $\bar{F}$ we observe for an arbitrary $\gamma>0$
\begin{eqnarray}\label{eq:hilf_tiefekleiner1}
\left(\frac{x_3}{\bar{h}_e}\right)^{-\gamma}=\left(\frac{\bar{h}_e}{x_3}\right)^{\gamma}\leq\left(\frac{\bar{h}_e}{\bar{h}_e}\right)^{\gamma}=1\quad\mbox{ for all $(x^{\prime\!\!\!},x_3)\in\overline{\Om}_2$}
\end{eqnarray}
since the component indicating depth fulfills $x_3\in [\bar{h}_e,h(x^{\prime})]$ in the aphotic zone.
 
At last, we consider $\bar{F}$. From \eqref{eq:hilf_tiefekleiner1} with $\gamma=2(\beta+1)$ we obtain an estimate of the integrand independent of $x_3$. Thus, the integral over $[\bar{h}_e,h(x^{\prime})]$ vanishes. Considering the finite maximal depth $h_{\max}$ and the inclusion $\Om_2^{\prime}\subseteq\Om^{\prime}$ we are able to employ the Lipschitz property of $E$. These arguments lead to
 \begin{equation*}
\begin{split}
 \|\bar{F}(y_1,.\,,t)-\bar{F}(z_1,.\,,t)\|^2_{\Om_2}
 &\!=
  \!\!\int_{\Om_2^{\prime}}\!\int_{\bar{h}_e}^{h(x^{\prime})}\!\!
 \frac{\beta^2}{\bar{h}_e^2}\!\left(\frac{x_3}{\bar{h}_e}\right)^{\!\!\!-2(\beta+1)} \!\!\!(E(y_1,x^{\prime\!\!\!},t)-E(z_1,x^{\prime\!\!\!},t))^2dx_3dx^{\prime}\\
&\leq\int_{\Om_2^{\prime}}  \frac{\beta^2}{\bar{h}_e^2}(h(x^{\prime})-\bar{h}_e)(E(y_1,x^{\prime\!\!\!},t)-E(z_1,x^{\prime\!\!\!},t))^2dx^{\prime}\\
& \leq\frac{\beta^2}{\bar{h}_e^2}(h_{\max}-\bar{h}_e)\|E(y_1,.\,,t)-E(z_1,.\,,t)\|^2_{\Om_2^{\prime}} \\
&\leq\frac{\beta^2}{\bar{h}_e^2}(h_{\max}-\bar{h}_e)\frac{\alpha^2(1-\nu)^2\bar{h}_e}{K^2_P}\|y_1(t)-z_1(t)\|^2_{\Om}.
\end{split}
\end {equation*}
The preliminaries lead to the Lipschitz properties of $d$ and $b$. As to $d_1$, we conclude applying the triangle inequality in combination with the convexity of the square function on $\mathbb{R}$
\begin{equation*}
\begin{split}
 \|&d_1(y,.\,,t)-d_1(z,.\,,t)\|^2_{\Om}=\|-\lambda y_2(t)+G(y_1,.\,,t)+\lambda z_2(t)-G(z_1,.\,,t)\|^2_{\Om_1}\\
 &\qquad\qquad\qquad\qquad\qquad\qquad\quad+\|-\lambda y_2(t)+\bar{F}(y_1,.\,,t)+\lambda z_2(x)-\bar{F}(z_1,.\,,t)\|^2_{\Om_2}\\
&\leq
2(\lambda^2 \|y_2(t)- z_2(t)\|^2_{\Om}\!+\|G(y_1,.\,,t)-G(z_1,.\,,t)\|^2_{\Om_1}
 \!+\|\bar{F}(y_1,.\,,t)-\bar{F}(z_1,.\,,t)\|^2_{\Om_2})\\
 &\leq 2( \lambda^2 \|y_2(t)- z_2(t)\|^2_{\Om}+ \frac{\alpha^2}{K_P^2}(1+\left(\frac{h_{\max}}{\bar{h}_e}-1\right)\beta^2(1-\nu)^2)\|y_1(t)-z_1(t)\|^2_{\Om})\\
 &\leq2\max\{ \lambda^2, \frac{\alpha^2}{K^2_P}(1+\left(\frac{h_{\max}}{\bar{h}_e}-1\right)\beta^2(1-\nu)^2) \}\|y(t)-z(t)\|^2_{\Om}.
\end{split}
\end{equation*} 
For $d_2$ we conclude similarly:
\begin{equation*}
\begin{split}
 \|d_2(y,.\,,t)-d_2(z,.\,,t)\|^2_{\Om}&=\|\lambda y_2(t)-\nu G(y_1,.\,,t)- \lambda z_2(t)+\nu G(z_1,.\,,t)\|_{\Om_1}^2\\
 &\qquad\qquad\qquad\qquad\qquad\qquad\qquad\!\!+\lambda^2\| y_2(t)- z_2(t)\|^2_{\Om_2}\\
 &\leq2(\lambda^2 \|y_2(t)- z_2(t)\|^2_{\Om}+\nu^2\|G(y_1,.\,,t)-G(z_1,.\,,t)\|^2_{\Om_1})\\
& \leq2\max\{ \lambda^2, \frac{\alpha^2\nu^2}{K_P^2} \}\|y(t)-z(t)\|^2_{\Om}.
\end{split}
\end{equation*} 
Thus, the function $d$ fulfills the Lipschitz condition on the product space with a constant given by
 \begin{displaymath}
L_d^2=2(\max\{ \lambda^2, \frac{\alpha^2}{K^2_P}(1+\left(\frac{h_{\max}}{\bar{h}_e}-1\right)\beta^2(1-\nu)^2) \}+\max\{ \lambda^2, \frac{\alpha^2\nu^2}{K_P^2} \}).
\end{displaymath}
The non-zero boundary reaction term $b_1$ is treated in a similar manner. Taking into account $b_1=0$ on $\Ga^{\prime}$ the parametrization of the boundary turns the corresponding norm into the following integrals over the surface $\Om^{\prime}$:
\begin{equation*}
\begin{split}
 \|b_1(y,.\,,t)-b_1(z,.\,,t)&\|^2_{\Gamma}=\int_{\Om_1^{\prime}}(E(y_1,x^{\prime\!\!\!},t)-E(z_1,x^{\prime\!\!\!},t))^2dx^{\prime}\\
&\qquad\qquad\qquad +\int_{\Om_2^{\prime}}(E(y_1,x^{\prime\!\!\!},t)-E(z_1,x^{\prime\!\!\!},t))^2\left(\frac{h(x^{\prime})}{\bar{h}_e}\right)^{\!\!\!-2b}dx^{\prime}\\
 &\leq
 \|E(y_1,.\,,t)-E(z_1,.\,,t)\|^2_{\Om^{\prime}}\leq
 \frac{\alpha^2(1-\nu)^2\bar{h}_e}{K^2_P}\|y_1(t)-z_1(t)\|^2_{\Om}.
\end{split}
\end{equation*} 
The estimation in the last line uses \eqref{eq:hilf_tiefekleiner1} and $\Om_1^{\prime}\cup\Om_2^{\prime}=\Om^{\prime}$ as well as the above result for $E$.
Since $b_2=0$ the Lipschitz constant for $b$ is given by 
 \begin{displaymath}
L_b= \frac{\alpha(1-\nu)\sqrt{\bar{h}_e}}{K_P}.
\end{displaymath}
The actual appearance of the Lipschitz constants is of interest in both the determination of the solution's upper bounds (cf. Prop.~\ref{prop:energy_estimates}) and its computation by means of the algorithm derived from Banach's Fixed Point Theorem (cf. Thm.~\ref{satz:ex_speziell}). In the second case, the Lipschitz constants determine the norm of the solution space.
\subsection{The derivative}\label{sub:derivatives}
In this subsection, we solve the derivative of the $PO_4$-$DOP$-model, given in Eq.~\eqref{eq:zustand_derivative}.
The corresponding weak formulation has the form
\begin{align*}
 \int_0^T\{\langle h^{\prime}(t),w(t)\rangle + B(h,w;t)+(\partial_yd(y)h&(t),w(t))_{\Om^2}+(\partial_yb(y)h(t),w(t))_{\Ga^2}\}dt\\
 & = \int_0^T\{(f(t),w(t))_{\Om^s}+(g(t),w(t))_{\Ga^s}\}dt
 \end{align*}
for all test functions $w\in L^2(0,T;L^2(\Om))^2$. 
Here, $h$ is the unknown and $y$ denotes the solution of the non-linearized equation.
The initial value is zero since it is independent of the parameters.
The inhomogeneities $f\in L^2(0,T;L^2(\Om))^2$ and $g\in L^2(0,T;L^2(\Ga))^2$ represent the derivatives of $d$ and $b$ with respect to the parameters. 

As a first step, we determine the Fr\'echet-derivatives of $d$ and $b$. Both operators are based on the auxiliary operator $G$. By definition, $G$ originates from the real function $f_K$, defined in Sec.~\ref{sub:saturation}, multiplied by an essentially bounded function of space and time.
Operators on function spaces originating from real functions are called superposition or Nemytski operators. The superposition operator $G$ is Fr\'echet-differentiable between the spaces $L^p(Q_T)=L^p(0,T;L^p(\Om))$ with $p>2$ and $L^2(Q_T)$ and the Fr\'echet-derivative is given by the product with the derivative of the underlying real function \cite [Thm.~3.13]{ap90}. Taking into account the proof of Lem.~\ref{hi:hilfsfktreel}, the derivative $\partial_yG(y_1)\in\mathcal{L}(L^p(Q_T),L^2(Q_T))$ is defined by 
\begin{displaymath}
\partial_yG(y_1)h_1 (x,t)=
\alpha\frac{K_Ph_1(x,t)}{(|y_1(x,t)|+K_P)^2}\frac{I(x^{\prime\!\!\!},t)\text{e}^{-x_3K_W}}{|I(x^{\prime\!\!\!},t)\text{e}^{-x_3K_W}|+K_I}\text{ for all }h_1\in L^p(Q_T).
\end{displaymath}
With this preliminary work, $\partial_yE\in\mathcal{L}(L^p(Q_T),L^2(\Om^{\prime}\times[0,T]))$ can be determined as
\begin{displaymath}
\partial_yE(y_1)h_1(x^{\prime\!\!\!},t)=(1-\nu)\int_0^{h_e(x^{\prime})}\alpha\frac{K_Ph_1((x^{\prime\!\!\!},x_3),t)}{(|y_1((x^{\prime\!\!\!},x_3),t)|+K_P)^2}\frac{I(x^{\prime},t)\text{e}^{-x_3K_W}}{|I(x^{\prime\!\!\!},t)\text{e}^{-x_3K_W}|+K_I}dx_3.
\end{displaymath}
This also determines $\partial_y\bar{F}(y_1)$ since $\bar{F}$ is the product of $E$ with a bounded factor independent of $y$. The remaining parts of the reaction terms are linear and bounded and hence also Fr\'echet-differentiable. 

As to the assumptions of Thm.~\ref{satz:ex_allgemein}, the above results suggest to choose the domain of definition $Y:=L^3(Q_T)^2$. The corollaries following Lemma~2.74 of R\r{u}\v{z}i\v{c}ka \cite{ru10} indicate that $W(0,T)^2$ is compactly embedded in $Y$. In particular, it is allowed to insert the non-linearized solution $y\in W(0,T)^2$ into $\partial_yd$ and $\partial_yb$. The ``reaction term'' $F_1:Y\to L^2(0,T;H^1(\Om)^*)^2$ is defined
by 
\begin{displaymath}
 \langle F_1(h),w \rangle_{L^2(0,T;H^1(\Om)^*)}:= \int_0^T\{(\partial_yd(y)h(t),w(t))_{\Om^2}+(\partial_yb(y)h(t),\tau w(t))_{\Ga^2}\}
 dt
 \end{displaymath}
for all $h\in Y$, $w\in L^2(0,T;H^1(\Om))^2$. Obviously, $F_1$ is linear and continuous and therefore weakly continuous (cf. proof of Lemma~\ref{lem:punktweise}). 

Further, the same argumentation as above, applied to $d(t):L^3(\Om)\to L^2(\Om)$,  yields the existence of $\partial_yd(t)(v)\in \mathcal{L}(L^3(\Om)^2, L^2(\Om)^2)$ 
for all $v\in L^3(\Om)^2$. Taking into account the embedding $L^3(\Om)\hookrightarrow H^1(\Om)$ and Lem.~\ref{lem:db_erz_funkt}, $\partial_yd(t)(v)$ can be perceived as an element of  $\mathcal{L}(H^1(\Om)^2, (H^1(\Om)^*)^2)$. Comparing with the results for the spaces involving time we observe $\partial_yd(t)(y(t))[h(t)]=[\partial_yd(y)h](t)$ for every $h\in Y$ and almost all $t\in[0,T]$. 

Since the considerations of the last paragraph analogously hold for $b(t)$ the operators $F_1(t):=\partial_yd(t)(y(t))+ \partial_yb(t)(y(t)):H^1(\Om)^2\to (H^1(\Om)^*)^2$ generate $F_1$ in the sense of Eq.~\eqref{eq:generating}. 

In order to apply Thm.~\ref{satz:ex_allgemein}, $F_1(t)$ is required to be Lipschitz continuous with respect to the norms of $L^2(\Om)^2$ and $(H^1(\Om)^*)^2$. 
As in the last section, it suffices to establish this condition for $\partial_yd(t)(y(t))$ and $\partial_yb(t)(y(t))$ with respect to the appropriate norms. 

We have seen that, given arbitrary elements $y_1,h_1\in L^3(Q_T)$, the expressions of $\partial_yE(y_1)h_1$ and $\partial_y\bar{F}(y_1)h_1$ correspond to the expressions of $E(y_1)$ and $\bar{F}(y_1)$; only the function $G(y_1)$ is replaced by $\partial_yG(y_1)h_1$. Therefore, we can apply the argumentation in Sec.~\ref{sub:ndop} to $\partial_yE(y_1)$ and $\partial_y\bar{F}(y_1)$ provided that $\partial_yG(y_1)$ fulfills a Lipschitz condition analogous to the one established for $G$. However, being defined as a multiplication with an essentially bounded factor, $\partial_yG(y_1(t))$ is obviously Lipschitz continuous as a function from $L^2(\Om)^2$ to $L^2(\Om)^2$.

Theorem~\ref{satz:ex_allgemein}, applied to $F_1$ and $F_2=0$, finally provides the unique weak solvability of the linearized equation. 
%
%
%
\section{Conclusions}
In this paper, we analyzed the $PO_4$-$DOP$-model of Parekh et al. \cite{pa05} describing the marine phosphorus cycle. The analysis covered solutions of the original model equations as well as their derivative (or linearization). By investigating the derivative, we prepared a further model analysis, especially concerning optimal parameters. While marine ecosystem models are usually being solved only in a discretized form, we considered the original continuous partial differential equations. Their properties allow to draw conclusions about the validity of the numerical model and the explanatory power of its output. As a result, we found out that both the model equations themselves and their derivative each have a unique weak solution. 

The $PO_4$-$DOP$-model stands as an example for an important class of marine ecosystem models since it contains typical components like saturation function (cf. Sec.~\ref{sub:saturation}) or non-local reaction terms. The latter, for example, appear when sinking processes are modeled via integrals over water columns.

The results for the two-dimensional model were derived in a much more general context. Thereby, we covered both the derivative and the model equations at the same time and, in addition, other models coupled by reaction terms with both Lipschitz continuous and monotone parts. Combinations of such kinds of reaction terms can appear when different coexisting phenomena have to be described.

We found two existence and uniqueness theorems for marine ecosystem model equations. In particular, we observed that the assumed Lip\-schitz condition holds for non-local reaction terms and reaction terms containing saturation functions.

A special interest lies in Theorem~\ref{satz:ex_speziell} since its proof is based on Banach's Fixed Point Theorem and is hence constructive. Accordingly, the unique solution can be identified with the limit  
of a sequence consisting of solutions of purely monotone problems. In case the monotone parts are zero (as e.g. in the $PO_4$-$DOP$-model), the approximating problems are even linear. The solvability of linear equations is well investigated \cite{la68,evans,tr02}. Especially, if a successful algorithm to solve linear problems is already available a numerical method to compute the unique nonlinear solution can be implemented easily.

The condition of Thm.~\ref{satz:ex_speziell} that $C([0,T];L^2(\Om))^s$ is embedded in the space $Y$ is often fulfilled for the model equations themselves. However, due to the theorem concerning superposition operators cited in Section~\ref{sub:derivatives}, linearized equations are typically defined on $L^p$-spaces into which $C([0,T];L^2(\Om))^s$ is not embedded.
Therefore, Thm.~\ref{satz:ex_allgemein} significantly contributes to the results obtained in this paper. The proof uses Galerkin approximation and treats again the case of combined Lipschitz continuous and monotone reaction terms.

While Thm.~\ref{satz:ex_allgemein} allows a broader variety of spaces $Y$ than Thm.~\ref{satz:ex_speziell} the latter is superior in case that the properties of $F_1$ are not sufficient.
Consider, for instance, the space $Y:=C([0,T];L^2(\Om))^s$ into which $W(0,T)^s$ is not compactly embedded, $F_2= 0$ and $F_1$ not to be weakly continuous. The latter is often true if $F_1$ is nonlinear. Then, Thm.~\ref{satz:ex_allgemein} does not hold because of the missing weak continuity of $F_1$. In the proof of Thm.~\ref{satz:ex_speziell}, however, this deficit is rendered harmless by inserting a fixed element of $z\in Y$ into $F_1$.
 
As a benefit for readers with an applicational background, Lemma~\ref{lem:db_erz_funkt} establishes the connection between the generalized formulation in the theorems and the actual reaction terms $d$ and $b$. 
We carried out all proofs in detail to enable readers to understand the argumentation and adapt it to their own situation if necessary. The proofs may also indicate why a favored reaction term is not allowed and how it could be altered.

The results about unique solvability are an important part in the validation of ecosystem models.
As we pointed out in the introduction, a further aspect involves the choice of adequate parameters. 
For the same reasons that inspired us to write this paper, the parameter identification problem should also be an object of mathematical investigation. The first step has already been achieved by solving the linearized equation. Further, questions about existence and uniqueness of optimal parameters will have to be answered. It will also be an interesting  task to find out if one parameter vector necessarily leads to one well-defined model output and thus allows to reconstruct the observational data.

A promising means to answer these questions could be provided by optimal control theory (see Tr\"oltzsch \cite{tr02}). Further research is needed to discover if all aspects of this theory can be transferred to parameter identification problems and which assumptions have to be fulfilled. These results would be the next step towards an improvement of biogeochemical models and thus towards the better understanding of marine ecosystems.  
 
\section*{Acknowledgements}
The research of Christina Roschat was supported by the DFG Cluster Future Ocean.

\bibliographystyle{elsarticle-num}      
\bibliography{meinebibliothek}   

\begin{thebibliography}{10}
\expandafter\ifx\csname url\endcsname\relax
  \def\url#1{\texttt{#1}}\fi
\expandafter\ifx\csname urlprefix\endcsname\relax\def\urlprefix{URL }\fi
\expandafter\ifx\csname href\endcsname\relax
  \def\href#1#2{#2} \def\path#1{#1}\fi

\bibitem{ap90}
J.~Appell, P.~P. Zabrejko, Nonlinear Superposition Operators, Cambridge
  University Press, Cambridge, 1990.

\bibitem{cod55}
E.~A. Coddington, N.~Levinson, An Introduction to Ordinary Differential
  Equations, McGraw-Hill, New York, 1955.

\bibitem{con97}
J.~B. Conway, A course in functional analysis, Springer, New York, 1990.

\bibitem{dil03}
L.~Dilling, S.~C. Doney, J.~Edmonds, K.~R. Gurney, R.~Harriss, D.~Schimel,
  B.~Stephens, G.~Stokes, The role of carbon cycle observations and knowledge
  in carbon management, Annu Rev Environ Resour 28~(1) (2003) 521--558.
\newblock \href {http://dx.doi.org/10.1146/annurev.energy.28.011503.163443}
  {\path{doi:10.1146/annurev.energy.28.011503.163443}}.

\bibitem{evans}
L.~C. Evans, Partial Differential Equations, Vol.~19 of Graduate Studies in
  Mathematics, American Mathematical Society, Providence, Rhode Island, 1998.

\bibitem{gaj67}
H.~Gajewski, K.~Gr\"oger, K.~Zacharias, Nichtlineare Operatorgleichungen und
  Operatordifferentialgleichungen, Akademie-Verlag, Berlin, 1974.

\bibitem{la68}
O.~A. Ladyzenskaya, V.~Solonnikov, N.~Uralceva, Linear and quasilinear
  equations of parabolic type, American Mathematical Society, Providence, Rhode
  Island, 1968.

\bibitem{pa05}
P.~Parekh, M.~J. Follows, E.~A. Boyle, Decoupling of iron and phosphate in the
  global ocean, Global Biogeochemical Cycles 19~(2).
\newblock \href {http://dx.doi.org/10.1029/2004GB002280}
  {\path{doi:10.1029/2004GB002280}}.

\bibitem{ray98}
J.~Raymond, H.~Zidani, Hamiltonian Pontryagin's principles for control problems
  governed by semilinear parabolic equations, Appl Math Optim 39 (1999)
  143--177.

\bibitem{rue10}
J.~R\"uckelt, V.~Sauerland, T.~Slawig, A.~Srivastav, B.~Ward, C.~Patvardhan,
  Parameter optimization and uncertainty analysis in a model of oceanic $co_2$
  uptake using a hybrid algorithm and algorithmic differentiation, Nonlinear
  Analysis: Real World Applications\href
  {http://dx.doi.org/10.1016/j.nonrwa.2010.03.006}
  {\path{doi:10.1016/j.nonrwa.2010.03006}}.

\bibitem{rud87}
W.~Rudin, Real and complex analysis, McGraw-Hill, New York, 1987.

\bibitem{ru10}
M.~R\r{u}\v{z}i\v{c}ka, Nichtlineare Funktionalanalysis, Springer, Berlin,
  2004.

\bibitem{sto11}
T.~Stocker, Introduction to climate modelling, Springer, Berlin, 2011.

\bibitem{tr02}
F.~Tr\"oltzsch, Optimal Control of Partial Differential Equations - Theory,
  Methods and Applications, Vol. 112 of Graduate Studies in Mathematics,
  American Mathematical Society, Providence, Rhode Island, 2010.

\bibitem{zei85}
E.~Zeidler, Nonlinear Functional Analysis and its Applications I \emph{Fixed
  Point Theorems}, Springer, NewYork, 1986.


\end{thebibliography}

\end{document}